\documentclass[12pt,a4paper]{amsart}
\usepackage{mathrsfs}

\newtheorem{theo+}              {Theorem}           [section]
\newtheorem{prop+}  [theo+]     {Proposition}
\newtheorem{coro+}  [theo+]     {Corollary}
\newtheorem{lemm+}  [theo+]     {Lemma}
\newtheorem{exam+}  [theo+]     {Example}
\newtheorem{rema+}  [theo+]     {Remark}
\newtheorem{defi+}  [theo+]     {Definition}
\newtheorem{clai+}  [theo+]     {Claim}
\newenvironment{theorem}{\begin{theo+}}{\end{theo+}}
\newenvironment{proposition}{\begin{prop+}}{\end{prop+}}
\newenvironment{corollary}{\begin{coro+}}{\end{coro+}}

\usepackage{amsthm}
\theoremstyle{plain} \theoremstyle{remark}
\newtheorem{remark}{Remark}
\newtheorem{example}{Example}

\def \r{\mbox{${\mathbb R}$}}
\def\E{/\kern-1.0em \equiv }

\evensidemargin  5mm \oddsidemargin  5mm \textwidth  145mm
\textheight 209mm

\linespread{1.1}

\thispagestyle{empty}

\author{Ze-Ping Wang and Xue-Yi Chen }

\address{ }

\thanks{ School of Mathematical Sciences, Guizhou
Normal University, Guiyang 550025,
People's Republic of China. E-mail:zepingwang@gznu.edu.cn (Ze-Ping Wang is the corresponding author), \\
\indent 
E-mail:xueyichen@gznu.edu.cn (X.-Y. Chen)}
\begin{document}
\title[Biharmonic conformal immersions] {Biharmonic conformal immersions into  a 3-dimensional conformally flat space}
\date {9/4/2024} \subjclass{58E20, 53C12, 53C42} \keywords{Harmonic maps, biharmonic maps, biharmonic conformal immersions, 3-dimensional conformally flat spaces.}
\maketitle

\section*{Abstract}
\begin{quote}
{\footnotesize  Inspired by the work of Ou \cite{Ou1,Ou4}, we study biharmonic conformal immersions  of surfaces into  a  conformally flat 3-space. We first give a characterization of biharmonic conformal immersions of  totally umbilical surfaces into  a generic 3-manifold. As an application,  we 
 give a method to produce  biharmonic conformal immersions into a conformally flat 3-space.  We then use the method to obtain a classification of biharmonic maps in a family of conformal immersions  and construct many examples of biharmonic conformal immersions from a 2-sphere  into a conformal 3-sphere.  
 Our examples include proper biharmonic conformal immersions of a 2-sphere minus a point into a conformal 3-sphere with   nonconstant conformal factor
 and the biharmonic isometric immersion $S^2(\frac{1}{\sqrt{2}})\to S^3$  which was found in \cite{CMO}. 
 Finally,  we  study  biharmonic conformal immersions of  Hopf cylinders  of a Riemannian submersion. }
\end{quote}

\section{Introduction and Preliminaries}
Recall that a  map   $\varphi:(M, g)\to (N,
h)$ between Riemannian manifolds  is called a harmonic map if its tension field  vanishes identically, i.e., $\tau(\varphi)={\rm
Trace}_{g}\nabla {\rm d} \varphi\equiv 0$ (see e.g., \cite{EL1}).
Biharmonic maps are critical
points of the bienergy functional for maps  $\varphi:(M, g)\to (N,
h)$ between Riemannian manifolds:
\begin{equation}\nonumber
E^{2}\left(\varphi,\Omega \right)= \frac{1}{2} {\int}_{\Omega}
\left|\tau(\varphi) \right|^{2}v_g,
\end{equation}
where $\Omega$ is a compact domain of $M$ and
$\tau(\varphi)={\rm Trace}_{g}\nabla {\rm d} \varphi$ is the
tension field of $\varphi$. The Euler–Lagrange equation of the  functional gives
the biharmonic map equation (\cite{Ji}): 
\begin{equation}\label{BT1}\notag
\tau_{2}(\varphi):={\rm
Trace}_{g}(\nabla^{\varphi}\nabla^{\varphi}-\nabla^{\varphi}_{\nabla^{M}})\tau(\varphi)
- {\rm Trace}_{g} R^{N}({\rm d}\varphi, \tau(\varphi)){\rm
d}\varphi =0.
\end{equation}
Here,  $R^{N}$ denotes the curvature operator of $(N, h)$ defined by
$$R^{N}(X,Y)Z=
[\nabla^{N}_{X},\nabla^{N}_{Y}]Z-\nabla^{N}_{[X,Y]}Z.$$
It is clear that any harmonic map is a biharmonic map. So, we are interested  in non-harmonic biharmonic maps which are named  {\bf proper biharmonic maps}.\\

Recall that a submanifold is called a biharmonic submanifold if the isometric immersion that defines the submanifold is a biharmonic map.  The biharmonic equation for a hypersurface in a Riemannian manifold $(N^{m+1}, h)$ defined by an isometric immersion  $\varphi:(M^{m},g)\to (N^{m+1},h)$ can be stated as
\begin{theorem}$($\cite{Ou02}$)$\label{0}
Let $\varphi:M^{m}\to N^{m+1}$ be a hypersurface with mean curvature vector field
$\eta=H\xi$. Then $\varphi$ is biharmonic if and only if:
\begin{equation}\label{BHEq}\notag
\begin{cases}
\Delta H-H |A|^{2}+H{\rm
Ric}^N(\xi,\xi)=0,\\
 2A\,({\rm grad}\,H) +\frac{m}{2} {\rm grad}\, H^2
-2\, H \,({\rm Ric}^N\,(\xi))^{\top}=0,
\end{cases}
\end{equation}
where ${\rm Ric}^N : T_qN\longrightarrow T_qN$ is the Ricci
operator of the ambient space defined by $\langle {\rm Ric}^N\,
(Z), W\rangle={\rm Ric}^N (Z, W)$ and  $A$ denotes the shape operator
of the hypersurface with respect to the unit normal vector field $\xi$.
\end{theorem}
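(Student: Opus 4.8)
The plan is to substitute the explicit form of the tension field of an isometric immersion into the bitension field $\tau_2(\varphi)$ and then split the resulting identity into its components normal and tangent to $M$; the two equations of the theorem are exactly these two components. First I would record that for an isometric immersion the tension field is $\tau(\varphi)=\mathrm{Trace}_g\,\nabla\mathrm{d}\varphi=m\eta=mH\xi$, and I would fix a local orthonormal frame $\{e_i\}_{i=1}^{m}$ on $M$ that is geodesic at the point under consideration (so $\nabla^M_{e_i}e_j=0$ there). At that point the connection term $\nabla^\varphi_{\nabla^M_{e_i}e_i}$ drops out, and the computation reduces to evaluating the rough Laplacian $\sum_i\nabla^\varphi_{e_i}\nabla^\varphi_{e_i}(mH\xi)$ together with the curvature sum $\sum_iR^N(\mathrm{d}\varphi(e_i),mH\xi)\mathrm{d}\varphi(e_i)$, the two terms whose difference is $\tau_2(\varphi)$. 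Throughout I would use the Gauss formula $\nabla^N_XY=\nabla^M_XY+\langle AX,Y\rangle\xi$ and the Weingarten formula $\nabla^N_X\xi=-AX$ (valid since $\xi$ is a unit normal).

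Next I would compute $\sum_i\nabla^\varphi_{e_i}\nabla^\varphi_{e_i}(H\xi)$ by differentiating $\nabla^N_{e_i}(H\xi)=(e_iH)\xi-HAe_i$ once more and splitting each resulting term into tangential and normal parts. This produces a scalar-Laplacian term $(\Delta H)\xi$, a gradient term $-2\sum_i(e_iH)Ae_i=-2A(\mathrm{grad}\,H)$, a term $-H\sum_i\nabla^M_{e_i}(Ae_i)$, and a norm term $-H|A|^2\xi$ coming from $\sum_iB(e_i,Ae_i)=|A|^2\xi$. The quantity $\sum_i\nabla^M_{e_i}(Ae_i)=\sum_i(\nabla_{e_i}A)e_i=\mathrm{div}\,A$ must then be rewritten, and this is the step where I would invoke the Codazzi equation together with the symmetry of $A$: contracting Codazzi gives $\mathrm{div}\,A=m\,\mathrm{grad}\,H-(\mathrm{Ric}^N\xi)^\top$, trading the derivative of the shape operator for $\mathrm{grad}\,H$ and a tangential Ricci term.

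For the curvature sum I would contract $\sum_iR^N(e_i,\xi)e_i$ separately against $\xi$ and against a tangent vector, using the standard symmetries of $R^N$: the normal part is $\langle\sum_iR^N(e_i,\xi)e_i,\xi\rangle=-\mathrm{Ric}^N(\xi,\xi)$ and the tangential part is $-(\mathrm{Ric}^N\xi)^\top$. Finally I would assemble $\tau_2(\varphi)$ and read off its two components. The coefficient of $\xi$ collects to $m\bigl(\Delta H-H|A|^2+H\,\mathrm{Ric}^N(\xi,\xi)\bigr)$, giving the first equation, while the tangential terms collect to $-2mA(\mathrm{grad}\,H)-m^2H\,\mathrm{grad}\,H+2mH(\mathrm{Ric}^N\xi)^\top$; dividing by $-m$ and using $\mathrm{grad}\,H^2=2H\,\mathrm{grad}\,H$ turns this into the second equation. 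Since $\tau_2(\varphi)=0$ is equivalent to the simultaneous vanishing of its normal and tangential parts, the two-equation system is exactly the biharmonicity condition. The main obstacle is purely the computational bookkeeping: keeping the tangential/normal splittings consistent through the second covariant derivative, and getting the signs in the Codazzi contraction and in the two curvature contractions exactly right, since a sign error in either of these propagates directly into the Ricci terms of the final system.
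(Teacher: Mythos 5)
Your proposal is correct, and it is essentially the standard proof of this result: the paper itself does not prove Theorem \ref{0} (it is quoted from \cite{Ou02}), and your argument --- computing $\tau_2(\varphi)$ of the isometric immersion in a geodesic frame, splitting into normal and tangential components, and invoking the contracted Codazzi identity ${\rm div}\,A = m\,{\rm grad}\,H - ({\rm Ric}^N\xi)^{\top}$ together with the contractions $\langle\sum_i R^N(e_i,\xi)e_i,\xi\rangle=-{\rm Ric}^N(\xi,\xi)$ and $(\sum_i R^N(e_i,\xi)e_i)^{\top}=-({\rm Ric}^N\xi)^{\top}$ --- is precisely the computation carried out in that reference. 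All signs and coefficients in your assembly check out against the stated system, so there is nothing to correct.
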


It is  well known that a surface in a 3-sphere $S^3$ defined by the isometric
immersion $S^2(\frac{1}{\sqrt{2}})\to S^3$  is proper biharmonic  ( \cite{CMO}).   For more examples  on biharmonic hypersurfaces, we refer the readers to the book \cite{OC} and the references therein. 

Biharmonic conformal immersions of  hypersurfaces into $(N^{m+1}, h)$ generalize the notion of biharmonic isometric immersions of hypersurfaces (i.e., biharmonic hypersurfaces). More precisely, suppose that  a hypersurface in $(N^{m+1}, h)$ is defined by an isometric immersion $\varphi:(M^{m},g)\to (N^{m+1},h)$   with the induced metric $g=\varphi^*h$. Then,  we call a conformal immersion of the  associated hypersurface into $(N^{m+1}, h)$ is a biharmonic conformal immersion of the  hypersurface 
if there is a function $\lambda : M^m \to(0,+\infty)$ 
such that the conformal immersion $\varphi : (M^m, \bar{g}=\lambda^{-2}g)\to (N^{m+1}, h)$  with the conformal
factor $\lambda$ is a biharmonic map. Note that $g=\varphi^*h=\lambda^2\bar{g}$, and if  $\lambda=1$, then the conformal immersion  is actually an isometric immersion.

In particular,  the biharmonic equation for the conformal immersion of a surface into a  3-dimensional model space can be stated as

\begin{theorem}$($ see e.g., \cite{Ou1, Ou4, Ou5}$)$\label{ff0}
Let  $\varphi:(M^{2},g)\to (N^{3},h)$ be an isometric immersion  with mean curvature vector field
$\eta=H\xi$. Then, the conformal immersion $\varphi:(M^{2},g_0=f^{-1}g)\to (N^{3},h)$ with the conformal factor $\lambda=f^{\frac{1}{2}}$ is biharmonic if and only if:
\begin{equation}\label{ff1}
\begin{cases}
\Delta(fH)-(fH)[|A|^{2}-{\rm
Ric}^N(\xi,\xi)]=0,\\
 A\,({\rm grad}\,(fH)) +(fH)[ {\rm grad}\, H
- \,({\rm Ric}^N\,(\xi))^{\top}]=0,
\end{cases}
\end{equation}
where ${\rm Ric}^N : T_qN\to T_qN$ is the Ricci
operator of the ambient space defined by $\langle {\rm Ric}^N\,
(Z), W\rangle={\rm Ric}^N (Z, W)$ and  $A$ denotes the shape operator
of the surface with respect to the unit normal vector field $\xi$, and 
the operators $\Delta, {\rm grad}$ and $|,| $ are taken with respect to the induced metric $g=\varphi^*h$ on the surface.
\end{theorem}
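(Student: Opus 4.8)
The plan is to reduce the biharmonicity of the conformal immersion $\varphi:(M^2,g_0)\to(N^3,h)$, with $g_0=f^{-1}g$, to a single tensorial identity on $(M^2,g)$, and then to split that identity into its normal and tangential parts. The starting point is that $\dim M=2$ makes the relevant conformal transformation laws collapse. Writing $\Delta^\varphi={\rm Trace}_{g}(\nabla^\varphi\nabla^\varphi-\nabla^\varphi_{\nabla^M})$ for the rough Laplacian on $\varphi^{-1}TN$, the general conformal-change formulas for $\tau$ and for $\Delta^\varphi$ both carry a factor $(m-2)$ in their correction terms, which vanishes when $m=2$. I would first establish $\tau_{g_0}(\varphi)=f\,\tau_g(\varphi)=2fH\,\xi$, using that $\tau_g(\varphi)=2H\xi$ for the isometric immersion, and likewise $\Delta^\varphi_{g_0}V=f\,\Delta^\varphi_{g}V$ for every section $V$, together with ${\rm Trace}_{g_0}R^N({\rm d}\varphi,\cdot){\rm d}\varphi=f\,{\rm Trace}_{g}R^N({\rm d}\varphi,\cdot){\rm d}\varphi$. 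Feeding these into the definition of the bitension field yields $\tau_{2,g_0}(\varphi)=f\big[\Delta^\varphi_g(2fH\xi)-{\rm Trace}_gR^N({\rm d}\varphi,2fH\xi){\rm d}\varphi\big]$, so that $\varphi$ is biharmonic with respect to $g_0$ if and only if $\Delta^\varphi_g(fH\,\xi)={\rm Trace}_gR^N({\rm d}\varphi,fH\,\xi){\rm d}\varphi$ on $(M,g)$.

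The second step is the decomposition of this identity, which is precisely the computation behind Theorem \ref{0} but carried out with the normal coefficient of the tension field replaced by $fH$. For an arbitrary function $\phi$ I would compute $\Delta^\varphi_g(\phi\xi)$ in a local $g$-orthonormal frame $\{e_1,e_2\}$, geodesic at the point, using the Weingarten relation $\nabla^N_{e_i}\xi=-Ae_i$ and the Gauss formula $\nabla^N_{e_i}(Ae_i)=\nabla^M_{e_i}(Ae_i)+|Ae_i|^2\xi$. This produces a normal part $(\Delta\phi-\phi|A|^2)\xi$ and a tangential part $-2A({\rm grad}\,\phi)-\phi\,{\rm div}\,A$, where $\Delta$ and ${\rm grad}$ are the operators of $(M,g)$. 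In parallel, the curvature symmetries give $\sum_i R^N(e_i,\xi)e_i=-{\rm Ric}^N(\xi,\xi)\,\xi-({\rm Ric}^N(\xi))^{\top}$ (with the signs fixed by the conventions of Theorem \ref{0}), so that subtracting the curvature term contributes $+\phi\,{\rm Ric}^N(\xi,\xi)\,\xi+\phi\,({\rm Ric}^N(\xi))^{\top}$.

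The remaining ingredient is the contracted Codazzi equation, which in codimension one reads ${\rm div}\,A=2\,{\rm grad}\,H-({\rm Ric}^N(\xi))^{\top}$, and it is here that the genuine mean curvature $H$ — not $fH$ — re-enters the tangential equation. Substituting this and then setting $\phi=fH$, the normal part becomes $\Delta(fH)-(fH)[|A|^2-{\rm Ric}^N(\xi,\xi)]$, which is the first equation of \eqref{ff1}, while the tangential part becomes, up to a constant factor, $A({\rm grad}(fH))+(fH)[{\rm grad}\,H-({\rm Ric}^N(\xi))^{\top}]$, which is the second. Because the substitution $\phi=2H\mapsto 2fH$ is performed at the level of the unsimplified bitension field and inherits every sign convention of Theorem \ref{0}, no independent resolution of Laplacian or Ricci sign ambiguities is needed.

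The step I expect to be delicate, and would treat most carefully, is exactly this last piece of bookkeeping. One must resist substituting $H\mapsto fH$ directly in the already-simplified equations of Theorem \ref{0}: the term $\frac{m}{2}{\rm grad}\,H^2=mH\,{\rm grad}\,H$ appearing there is secretly a product of two distinct factors — one factor of $H$ coming from the normal coefficient of the tension field, which conformally rescales to $fH$, and one coming from ${\rm div}\,A$ via Codazzi, which is intrinsic to the immersion and is left unchanged. Replacing only the tension coefficient $2H$ by $2fH$ while holding $A$, $H$ and ${\rm Ric}^N$ fixed is precisely what produces the asymmetric combination $A({\rm grad}(fH))+(fH)\,{\rm grad}\,H$ instead of a symmetric one; getting this asymmetry right is the crux of the argument. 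Beyond it, the only genuine work is verifying the two dimension-two rescaling identities of the first paragraph.
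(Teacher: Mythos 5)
Your proof is correct. The paper states Theorem \ref{ff0} as a quoted result (citing \cite{Ou1,Ou4,Ou5}) and contains no proof of its own, so the only meaningful comparison is with the cited sources: your argument --- the dimension-two conformal covariance $\tau_{g_0}(\varphi)=f\,\tau_g(\varphi)$ and $\Delta^{\varphi}_{g_0}V=f\,\Delta^{\varphi}_{g}V$, followed by the normal/tangential decomposition of $\Delta^{\varphi}_g(fH\xi)$ via the Weingarten and Gauss formulas and the contracted Codazzi identity ${\rm div}\,A=2\,{\rm grad}\,H-({\rm Ric}^N(\xi))^{\top}$ --- is precisely the standard derivation given there, and your key bookkeeping point (only the tension coefficient rescales to $fH$, while ${\rm div}\,A$ stays intrinsic, producing the asymmetric term $A({\rm grad}(fH))+(fH)\,{\rm grad}\,H$ and recovering Theorem \ref{0} when $f\equiv 1$, $m=2$) checks out exactly.
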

Note that the induced metric of the surface $g=\varphi^*h=\lambda^2 g_0=fg_0$ and the conformal factor $\lambda=f^{\frac{1}{2}}$.

It is obvious  that  by both Theorem \ref{0} and Theorem \ref{ff0}, if a surface is a minimal surface (i.e., $H=0$) in $(N^3,h)$, then any conformal immersion of the associated  surface  into  $(N^3,h)$  is harmonic and hence biharmonic;  if a surface in $(N^3,h)$ is biharmonic, then a conformal immersion of the associated  surface  into  $(N^3,h)$ with any positive constant conformal factor is  always biharmonic. Such maps are trivially biharmonic conformal immersions. Therefore, we are more interested in those conformal immersions which are proper biharmonic  with nonconstant conformal factor  in this paper.\\

 Motivated by many interesting examples and the rich theory of harmonic and biharmonic immersions  of  surfaces, we study biharmonic conformal immersions  of a Hopf cylinder and a 2-sphere into  a conformally flat 3-space in this paper. There exist many interesting examples and important results of harmonic and biharmonic immersions from   a 2-sphere and a  cylinder into a generic manifold:\\
 
$\bullet$ Chern-Goldberg \cite{CG}: Any harmonic immersion
$\phi:S^2\longrightarrow (N^n,h)$ has to be minimal, or equivalently, a
conformal immersion. 

$\bullet$ Sacks-Uhlenbeck \cite{SU}: Any harmonic map
$\phi:S^2\longrightarrow (N^n,h)$ with $n\ge 3$ has to be a conformal
branched minimal immersion. 

$\bullet$ No a part of the standard sphere $S^2$ can be biharmonically conformally
 immersed into $\r^3$ (see \cite{Ou4}).

$\bullet$  Non-conformal rotationally symmetric map $\varphi:(S^2, dr^2+\sin^2rd\theta^2)\to(N^2,d\rho^2+(\rho^2+C)d\phi^2)$ with $\varphi(r,\theta)=(|\cot\frac{r}{2}|[1+\ln(1+\tan\frac{r}{2})],\theta)$ is a proper biharmonic map defined locally on a 2-sphere (with two singular points), where constant $C>0$ (see \cite{WOY1}). In addition, for $f=\frac{4\tan^2\frac{r}{2} (1+\tan^2\frac{r}{2})(1+2\tan^2\frac{r}{2})^{\frac{3}{2}}}{3\tan^2\frac{r}{2}(1+\tan^2\frac{r}{2})(2+\tan^2\frac{r}{2})+1}$, then non-conformal rotationally symmetric map $\varphi:(S^2, f^{-1}(dr^2+\sin^2rd\theta^2))\to(S^2,d\rho^2+\sin^2\rho d\phi^2)$ with $\varphi(r,\theta)=(\frac{1}{2}\arccos (\sin^2\frac{r}{2}),2\theta)$ from a Riemann 2-sphere to a 2-sphere is a proper biharmonic map defined locally on a Riemann 2-sphere (with two singular points) (see \cite{WOY2}).

$\bullet$  A proper  biharmonic conformal immersion of a developable surface into $\r^3$  exists only in
the case when the surface is a cylinder (see \cite{Ou4, WQ}).

$\bullet$  Consider a circle $\gamma:I\to S^2$ with  radius $\frac{1}{\kappa}$, ($\kappa>1$) and the Hopf cylinder $\varphi: \Sigma=\cup_{s\in I}\pi^{-1}(\gamma(s))\to S^2\times\r$ with the induced metric $g=\varphi^*h$. For some constants $d_1$ and $d_2$, and the function $f=d_1e^{z\sqrt{\kappa^2-1}}+d_1e^{-z\sqrt{\kappa^2-1}}$, then the conformal immersion \;$\varphi:(\Sigma,\bar{ g}=f^{-1}\varphi^*h)\to S^2\times\r$  is a proper biharmonic map (see \cite{Ou4})).  Note that the Hopf cylinder  has constant mean curvature $H=\frac{\kappa}{2}$.

$\bullet$  For some constants $d_1$ and $d_2$, and the function $f=(d_1z+d_2)(1+s^2)^{3/2}$, then the conformal immersion of the cylinder into $\r^3$, i.e.,  $\varphi:(\Sigma, f^{-1}(ds^2+dz^2))\to\r^3$ with $\varphi(s,z)=(\ln(\sqrt{1+s^2}+s)+\ln 4,
 \sqrt{1+s^2},\; z)$,  is a proper biharmonic map (see \cite{WQ}).

For more account on basic examples and properties of biharmonic maps, one refers the readers to \cite{BFO, CH, LiO,Lu,LO,Oua, Ou1,Ou2,Ou3,Ou5} and 
the  references therein.
 
As far as biharmonic maps from $S^2$ (globally defined on a 2-sphere) are
concerned we know nothing more than the biharmonic isometric immersion $S^2(\frac{1}{\sqrt{2}})\to S^3$  (see \cite{CMO})  (or a composition of this with a totally geodesic map from a 3-sphere $S^3$ into another model space ).  So, it would be interesting to know if there are proper biharmonic conformal immersions from $S^2$ or find new examples of such maps.  Knowing from \cite{Ou4, WQ} that  a proper  biharmonic conformal immersion of a developable surface into $\r^3$  exists only in the case where the surface is a cylinder, we want to  classify or construct biharmonic  conformal immersions of  Hopf cylinders of a Riemannian submersion from some 3-manifolds.\\

 In this paper, we study biharmonic conformal immersions  of surfaces into  a  conformally flat 3-space. We first give a characterization of biharmonic conformal immersions of a totally umbilical surface into  a generic 3-manifold (Theorem \ref{FS1}). As an application,  we  give a method to produce  biharmonic conformal immersions into a conformally flat 3-space (Proposition \ref{PSSL0}).  We then use the method to obtain a classification of biharmonic maps in a family of conformal immersions  and construct a family of infinitely many proper biharmonic conformal immersions from a 2-sphere minus a point into a conformal 3-sphere with nonconstant conformal factor and one singular point at the north pole  (Theorem \ref{SSL}).  
 We are able to  give the example of   biharmonic isometric immersion $S^2(\frac{1}{\sqrt{2}})\to S^3$  which was found in \cite{CMO} with a different method  (Proposition \ref{ISS}). 
 Finally, we  derive biharmonic equation for conformal immersions of  Hopf cylinders  of a Riemannian submersion (Proposition \ref{Hopf0}). We then apply the equation to give a  classification of biharmonic conformal immersions  of the Hopf cylinders into BCV 3-spaces which include conformally flat spaces $S^3$, $\r^3$, $S^2\times\r$ and $H^2\times\r$ (Theorem \ref{hopf} and Corollary \ref{chopf}).

\section{Biharmonic  conformal immersions of  totally umbilical surfaces into a 3-dimensional conformally space}

In this section, we give a characterization  of biharmonic  conformal immersions of  totally umbilical surfaces into a 3-dimensional model space. As  applications, 
we give a method to produce proper biharmonic  conformal immersions which are constructed by starting with a totally umbilical surface in a conformally ﬂat space and then performing a suitable conformal change of the conformally ﬂat metric into another conformally ﬂat metric.  We then use the method to  study  biharmonicity  of  a family of  conformal immersions  and  construct  proper biharmonic conformal immersions from  $S^2$ into a conformal flat 3-sphere. We  also provide  many examples of   biharmonic conformal immersions from $\r^2$ or $H^2(-1)$.

\subsection{ A characterization of biharmonic  conformal immersions of   totally umbilical surfaces into a  Riemannian 3-manifold}

\begin{theorem}\label{FS1}
Let  $\varphi:(M^{2},g)\to (N^{3},h)$  be a totally umbilical  surface  with mean curvature vector field
$\eta=H\xi$. Then,  the conformal immersion $\varphi:(M^{2},f^{-1}g)\to (N^{3},h)$ is biharmonic  if and only if one of the following  cases happens:\\
$(i)$: the surface  is  totally geodesic; \\
$(ii)$  the surface has nonzero mean curvature $H$,   and $f$ and $H$ satisfy 
\begin{equation}\label{fs1}
\begin{array}{lll}
({\rm Ric}^N\,(\xi))^{\top}={\rm grad}_gH,\;\;\;
{\rm Ric}^N(\xi,\xi)=|A|^2=2H^2,\;\;{\rm grad}_g(fH)=0.
\end{array}
\end{equation}
In this case,  we have  nonzero function $f=c|H|^{-1} $ on the surface,
where $c>0$ is a constant. Moreover, by the totally umbilical property of the surface, the first equation of (\ref{fs1})  holds naturally. 
\end{theorem}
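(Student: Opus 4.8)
The plan is to reduce everything to the two biharmonicity equations of Theorem \ref{ff0}, specialized to the totally umbilical case. Since the surface is totally umbilical, its shape operator is $A=H\,{\rm Id}$ on $TM$, so that $A\,({\rm grad}_g(fH))=H\,{\rm grad}_g(fH)$ and $|A|^2=2H^2$. Substituting these into system (\ref{ff1}) turns the biharmonic condition into
\begin{equation}\notag
\begin{cases}
\Delta(fH)-(fH)[2H^2-{\rm Ric}^N(\xi,\xi)]=0,\\
H\,{\rm grad}_g(fH)+(fH)[{\rm grad}_gH-({\rm Ric}^N(\xi))^{\top}]=0.
\end{cases}
\end{equation}
From here the argument is essentially algebraic, once the ambient Ricci term along the tangential directions is understood.

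The first step, and the one I expect to carry the real geometric content, is to prove that the identity $({\rm Ric}^N(\xi))^{\top}={\rm grad}_gH$ holds automatically for a totally umbilical surface. For this I would use the Codazzi equation $(R^N(X,Y)\xi)^{\top}=(\nabla^M_YA)X-(\nabla^M_XA)Y$; when $A=H\,{\rm Id}$ one has $(\nabla^M_XA)Y=(XH)Y$, so that $(R^N(X,Y)\xi)^{\top}=(YH)X-(XH)Y$. Choosing an adapted orthonormal frame $\{e_1,e_2,\xi\}$ and using the pair symmetries of $R^N$, one computes for any tangent $W$ that ${\rm Ric}^N(\xi,W)=\sum_{i}\langle R^N(e_i,W)\xi,e_i\rangle$, and inserting the Codazzi expression collapses the sum to $\langle {\rm grad}_gH,W\rangle$. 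Keeping the sign conventions of the shape operator and of $H$ consistent throughout is the delicate point here, and this is where I would be most careful.

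With the tangential Ricci identity in hand, the second equation of the reduced system collapses to $H\,{\rm grad}_g(fH)=0$, which produces the stated dichotomy. If $H\equiv 0$ then $A=0$, the surface is totally geodesic, and both equations hold trivially (a conformal immersion of a minimal surface is harmonic, hence biharmonic), giving case $(i)$. If instead $H\neq 0$, then ${\rm grad}_g(fH)=0$, so $fH$ is a constant $c\neq 0$; feeding this back into the first equation gives $\Delta(fH)=\Delta c=0$, whence $c[2H^2-{\rm Ric}^N(\xi,\xi)]=0$ and therefore ${\rm Ric}^N(\xi,\xi)=|A|^2=2H^2$. This is exactly the content of (\ref{fs1}), and from $fH=c$ together with $f>0$ and $H$ of constant sign one reads off $f=c|H|^{-1}$ with $c>0$.

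Finally, the converse is immediate: assuming $(i)$ or the relations (\ref{fs1}), one substitutes back into the reduced system and checks both equations are satisfied. In case $(ii)$ the constancy of $fH$ annihilates $\Delta(fH)$ while the hypothesis ${\rm Ric}^N(\xi,\xi)=2H^2$ kills the bracket $2H^2-{\rm Ric}^N(\xi,\xi)$, and the tangential equation becomes $0+(fH)[{\rm grad}_gH-{\rm grad}_gH]=0$. Thus biharmonicity is equivalent to $(i)$ or $(ii)$, completing the proof. The only nontrivial ingredient is the Codazzi computation of the second paragraph; everything else is substitution into Theorem \ref{ff0}.
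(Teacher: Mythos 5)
Your proposal is correct and follows essentially the same route as the paper's proof: both use the Codazzi equation together with umbilicity ($A=H\,{\rm Id}$) to establish the identity $({\rm Ric}^N(\xi))^{\top}={\rm grad}_gH$, then substitute into the system of Theorem \ref{ff0} to get $H\,{\rm grad}_g(fH)=0$ and, when $H\neq 0$, the conditions $fH={\rm const}$ and ${\rm Ric}^N(\xi,\xi)=|A|^2=2H^2$. The only cosmetic difference is that you invoke the operator form of Codazzi, $(R^N(X,Y)\xi)^{\top}=(\nabla_YA)X-(\nabla_XA)Y$, while the paper derives the same identity by a componentwise computation with the scalar second fundamental form.
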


\begin{proof}
One can choose an orthonormal frame $\{e_i,\;
\xi,\;i=1,2\}$ of $(N^3,h)$ adapted to the surface  with $\xi$ being the unit normal vector field.
If the surface is totally umbilical, then we have $A(e_i) = H e_i$ for $i=1,2$,  $|A|^2 = 2H^2$ and $ A({\rm grad}_g H)=H{\rm
grad}_g\, H.$

A straightforward computation  gives
\begin{eqnarray}\label{Cod1}
\sum_{i=1}^{2}\langle R^N(e_i,e_j)e_i,\xi\rangle =\sum_{i=1}^{2}R^N(\xi,
e_i,e_i, e_j) =-{\rm Ric}^N(e_j,\xi) .
\end{eqnarray}
Note that $\{e_i\}$, $i=1,2$,  are principal
directions with principal curvature $H$ one can compute that
\begin{eqnarray}\label{Cod2}
(\nabla_{e_i}h)(e_j,e_i)&=&{e_i}(h(e_j,e_i))-h(\nabla_{e_i}e_j,e_i)-h(\nabla_{e_i}e_i,e_j)\\\notag
&=&-h(e_i,e_i)\langle\nabla_{e_i}e_j,e_i\rangle-h(e_j,e_j)\langle\nabla_{e_i}e_i,e_j\rangle\\\notag
&=&-H(\langle\nabla_{e_i}e_j,e_i\rangle+\langle\nabla_{e_i}e_i,e_j\rangle)=0,
\end{eqnarray}
and
\begin{equation}\label{Cod3}
\begin{array}{lll}
(\nabla_{e_j}h)(e_i,e_i)={e_j}(h(e_i,e_i))-h(\nabla_{e_j}e_i,e_i)-h(\nabla_{e_j}e_i,e_i)\\
=e_j(H)-2h(e_i,e_i)\langle\nabla_{e_j}e_i,e_i\rangle
=e_j(H),\;\;\;\;for\;j=1,2.
\end{array}
\end{equation}
Substracting (\ref{Cod2}) from (\ref{Cod3}) and using (\ref{Cod1}) and the Codazzi equation
 for a surface we have
\begin{eqnarray}\label{Cod4}
e_j(H)={\rm R}^N(\xi, e_i,e_j, e_i).
\end{eqnarray}
Summing both sides of (\ref{Cod4}) over $i$ from 1 to 2 yields
\begin{equation}\label{Cod5}
e_j(H)={\rm Ric}^N(e_j,\xi),\; j=1, 2,\;i.e.,\;{\rm grad}_gH =({\rm Ric}^N\,(\xi))^{\top}.
\end{equation}
It is a fact that  (\ref{Cod5})  holds naturally since the totally umbilical property of the surface. \\

Notice  that if $H=0$, then the surface is totally geodesic and hence biharmonic. From now on, consider that $H\neq0$. 
Substituting  (\ref{Cod5}) into the 2nd equation of
(\ref{ff1}) and using the totally umbilical property of the surface  yields
\begin{equation}\label{Cod6}
{\rm grad}_g(fH) =0,
\end{equation}
which implies that the function $f|H|=c$ is a positive constant on the surface.  We substitute this into  the 1st equation of
(\ref{ff1}) to  obtain 
\begin{equation}\label{Cod7}
{\rm Ric}^N(\xi,\xi)=|A|^2=2H^2.
\end{equation}
Clearly, one observes  that  if $H={\rm constant}\neq0$, then $f$  has to be a constant and the surface is actually biharmonic.

Summarizing the above results we obtain the theorem.
\end{proof}
From Statement (ii) in Theorem \ref{FS1},  the Ricci curvature ${\rm Ric}^N(\xi,\xi)=|A|^2=2H^2>0$ on the surface,  so we have
\begin{corollary}\label{TGC}
Any biharmonic conformal immersion  of a totally umbilical  surface into  a nonpositively curved manifold  $(N^{3},h)$ is harmonic. In this case, the surface in  $(N^{3},h)$ has to be a totally geodesic surface.
\end{corollary}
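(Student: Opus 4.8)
The plan is to obtain this corollary as an immediate consequence of the dichotomy in Theorem \ref{FS1}, using the sign of the curvature to eliminate the nontrivial alternative. By Theorem \ref{FS1}, a biharmonic conformal immersion of a totally umbilical surface must satisfy exactly one of its two conditions: either $(i)$ the surface is totally geodesic, or $(ii)$ $H \ne 0$ together with the system (\ref{fs1}). So the whole argument reduces to showing that the hypothesis of nonpositive curvature is incompatible with case $(ii)$.

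The decisive step is to read off the sign of ${\rm Ric}^N(\xi,\xi)$. Using the adapted orthonormal frame $\{e_1, e_2, \xi\}$ from the proof of Theorem \ref{FS1}, I would expand ${\rm Ric}^N(\xi,\xi) = \sum_{i=1}^{2}\langle R^N(e_i,\xi)\xi, e_i\rangle$, which is exactly the sum $K(e_1,\xi) + K(e_2,\xi)$ of the sectional curvatures of the two planes spanned by $\xi$ and a tangent direction $e_i$. If $(N^3,h)$ is nonpositively curved, then each of these sectional curvatures is $\le 0$, so ${\rm Ric}^N(\xi,\xi) \le 0$ at every point of the surface.

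On the other hand, in case $(ii)$ the system (\ref{fs1}) gives ${\rm Ric}^N(\xi,\xi) = |A|^2 = 2H^2$ with $H \ne 0$, hence ${\rm Ric}^N(\xi,\xi) > 0$. This contradicts the inequality just obtained, so case $(ii)$ cannot occur and we are forced into case $(i)$: the surface is totally geodesic, and in particular $H \equiv 0$. To finish, I would invoke the remark following Theorem \ref{ff0}, according to which a minimal surface ($H \equiv 0$) has the property that every conformal immersion of it into $(N^3,h)$ is harmonic; this yields both assertions of the corollary.

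I do not expect a genuine obstacle in this argument, since the heavy lifting was already done in Theorem \ref{FS1}. The only point that deserves explicit justification is the passage from nonpositive sectional curvature to nonpositivity of ${\rm Ric}^N(\xi,\xi)$; this is immediate in dimension three because the normal Ricci curvature is a sum of only two sectional curvatures, but the sign convention for $R^N$ should be checked to be the one in which the round sphere has positive curvature, consistent with the value $2H^2 > 0$ appearing in Theorem \ref{FS1}.
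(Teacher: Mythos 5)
Your proof is correct and follows essentially the same route as the paper: the paper likewise deduces the corollary directly from Theorem \ref{FS1} by observing that case $(ii)$ forces ${\rm Ric}^N(\xi,\xi)=|A|^2=2H^2>0$, which is impossible in a nonpositively curved $(N^3,h)$, leaving only the totally geodesic (hence harmonic) case. Your explicit justification that nonpositive sectional curvature gives ${\rm Ric}^N(\xi,\xi)\le 0$ as a sum of two sectional curvatures is a detail the paper leaves implicit, but it is the same argument.
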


\begin{remark}
 Applying Theorem \ref{FS1} and using the known  results (see e.g., \cite{CMO,CH,OC}), we can conclude that if a  conformal immersion of a totally umbilical surface into $\r^3$, $H^3$  or $S^3$ is biharmonic, then the associate surface has to be either a totally geodesic surface or  $S^2(\frac{1}{\sqrt{2}})$ in $S^3$, which  was obtained in \cite{Ou5}.
\end{remark}

\subsection{A method to  construct biharmonic  conformal immersions into a 3-dimensional conformally space.\\}

Now, we are ready to give the  method to construct  proper biharmonic  conformal immersions.

\begin{proposition}\label{PSSL0}
 Consider  an isometric immersion  $(\r^2,g_0=\varphi^*h_0) \to (\r^3, h_0 =F^{-2}(dx^2+dy^2+dz^2))$ with $\varphi(x, y) = (x,y,a_1x+a_2y+a_3)$, where $a_1,a_2$ and $a_3$ are constants. Then, the conformal immersion $\varphi: (\r^{2}, f^{-1}\varphi^*h)\to(\r^{3}, h=\beta^{-2}h_0)$   is proper biharmonic if and only if 
 $F\beta $ satisfies the following 
\begin{equation}\label{PSS}
\begin{array}{lll}
\frac{1+2a_1^2+a_2^2}{1+a_1^2+a_2^2}[F\beta(F\beta)_{xx}-2((F\beta)_x)^2]+\frac{1+a_1^2+2a_2^2}{1+a_1^2+a_2^2}[F\beta(F\beta)_{yy}-2((F\beta)_y)^2]\\
+\frac{2+a_1^2+a_2^2}{1+a_1^2+a_2^2}[F\beta(F\beta)_{zz}-2((F\beta)_z)^2]
+\frac{2a_1a_2}{1+a_1^2+a_2^2}[F\beta(F\beta)_{xy}-2(F\beta)_x(F\beta)_y]\\

-\frac{2a_1}{1+a_1^2+a_2^2}[F\beta(F\beta)_{xz}-2(F\beta)_x(F\beta)_z]
-\frac{2a_2}{1+a_1^2+a_2^2}[F\beta(F\beta)_{yz}-2(F\beta)_y(F\beta)_z]=0.
\end{array}
\end{equation}
when restricting to the surface $z=a_1x+a_2y+a_3$, and 
\begin{equation}\label{PSS0}
f=\frac{c\sqrt{1+a_1^2+a_2^2}}{|-a_1(F\beta)_ x-a_2(F\beta)_y+(F\beta)_ z|}\circ\varphi,
\end{equation}
 where constant  $c>0$. Furthermore, the conformal factor $\lambda=f^{\frac{1}{2}}$ and the surface  $(\r^2,\varphi^*h)$ has the mean curvature $H=\frac{-a_1(F\beta)_ x-a_2(F\beta)_y+(F\beta)_ z}{\sqrt{1+a_1^2+a_2^2}}$.
\end{proposition}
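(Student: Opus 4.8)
The plan is to recognize the whole statement as an instance of Theorem \ref{FS1}, so the first task is to verify that the immersed plane is totally umbilical in the relevant ambient metric and then to translate the three conditions of (\ref{fs1}) into the explicit PDE (\ref{PSS}) and the formula (\ref{PSS0}). Writing $\rho=F\beta$, the target metric is $h=\beta^{-2}h_0=(F\beta)^{-2}(dx^2+dy^2+dz^2)=\rho^{-2}\delta$, which is conformal to the Euclidean metric $\delta$. The graph $z=a_1x+a_2y+a_3$ is an affine plane, hence totally geodesic in $(\mathbb{R}^3,\delta)$; since total umbilicity is preserved under a conformal change of the ambient metric, the surface is totally umbilical in $(\mathbb{R}^3,h)$ and its $\delta$-second fundamental form vanishes. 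This is exactly the hypothesis of Theorem \ref{FS1}, so the conformal immersion is biharmonic if and only if it is totally geodesic (the excluded harmonic case) or the three equations in (\ref{fs1}) hold with $H\neq0$; I would work in the latter (proper) case throughout.

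Next I would compute the geometric data explicitly. The Euclidean unit normal of the plane is $n=\frac{1}{\sqrt{1+a_1^2+a_2^2}}(-a_1,-a_2,1)$, so the $h$-unit normal is $\xi=\rho\,n$. Using how the second fundamental form of a totally geodesic hypersurface transforms under $h=\rho^{-2}\delta$, the mean curvature with respect to $h$ becomes $H=n(\rho)=\frac{-a_1\rho_x-a_2\rho_y+\rho_z}{\sqrt{1+a_1^2+a_2^2}}$, which is the stated formula, and total umbilicity gives $|A|^2=2H^2$ for free. Because the first equation of (\ref{fs1}) holds automatically for umbilical surfaces (as recorded in Theorem \ref{FS1}), the only remaining algebraic condition besides the Ricci equation is ${\rm grad}_g(fH)=0$, i.e. $f|H|=c$ for a positive constant $c$; substituting the expression for $H$ yields (\ref{PSS0}) directly, and the conformal factor is $\lambda=f^{1/2}$.

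The heart of the proof, and the step I expect to be the main obstacle, is showing that the last condition ${\rm Ric}^N(\xi,\xi)=2H^2$ is equivalent to (\ref{PSS}). For this I would insert $h=\rho^{-2}\delta$ into the standard formula for the Ricci tensor under a conformal change of a flat metric in dimension three, obtaining ${\rm Ric}^h=\frac{1}{\rho}\nabla^2\rho+\left(\frac{1}{\rho}\Delta_0\rho-\frac{2}{\rho^2}|\nabla\rho|^2\right)\delta$ with all operators Euclidean, and then evaluate it on $\xi=\rho\,n$. The resulting ${\rm Ric}^h(\xi,\xi)$ produces exactly the Hessian combination $\rho\,\rho_{xx},\ \rho\,\rho_{yy},\dots$ appearing in the first halves of the brackets of (\ref{PSS}), with the coefficients $1+2a_1^2+a_2^2,\ \dots$ arising from the products $n_in_j$ together with the trace term $\Delta_0\rho$. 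The delicate point is the bookkeeping of the quadratic terms: one must verify the algebraic identity that the coefficient bracket $Q$ of the $\rho_x^2,\rho_x\rho_y,\dots$ terms factors as $Q=(1+a_1^2+a_2^2)|\nabla\rho|^2+(a_1\rho_x+a_2\rho_y-\rho_z)^2$, so that the $|\nabla\rho|^2$ contribution in ${\rm Ric}^h(\xi,\xi)$ cancels against the $-2|\nabla\rho|^2$ generated by the $-2\rho_x^2,\dots$ terms in (\ref{PSS}), while the remaining square equals precisely $(1+a_1^2+a_2^2)H^2$. Once this cancellation is checked, clearing the common factor $1+a_1^2+a_2^2$ shows that (\ref{PSS}), restricted to the plane $z=a_1x+a_2y+a_3$, is equivalent to ${\rm Ric}^N(\xi,\xi)=2H^2$. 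Assembling the three conditions then gives that the conformal immersion is proper biharmonic precisely when (\ref{PSS}) and (\ref{PSS0}) hold.
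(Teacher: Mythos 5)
Your proposal is correct and follows essentially the same route as the paper: reduce to Theorem \ref{FS1} via the conformal invariance of total umbilicity, compute $H=\xi^0(F\beta)$ and ${\rm Ric}^h(\xi,\xi)=(F\beta)\Delta_0(F\beta)-2|\nabla_0(F\beta)|^2+(F\beta){\rm Hess}_0(F\beta)(\xi^0,\xi^0)$ by conformal-change formulas, and then identify ${\rm grad}_g(fH)=0$ with (\ref{PSS0}) and ${\rm Ric}^h(\xi,\xi)=2H^2$ with (\ref{PSS}). Your explicit factorization $Q=(1+a_1^2+a_2^2)|\nabla\rho|^2+(a_1\rho_x+a_2\rho_y-\rho_z)^2$ correctly fills in the algebraic step that the paper asserts without detail.
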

\begin{proof}
First of all, we give the following claim:\\
{\bf Claim:} The surface $\varphi: (\r^{2},g=\varphi^*h_0)\to(\r^{3}, h_0=F^{-2}(dx^2+dy^2+dz^2))$ with $\varphi(x, y) = (x,y,a_1x+a_2y+a_3)$    is totally umbilical with the mean curvature $H_0=\frac{-a_1F_ x-a_2F_y+F_ z}{\sqrt{1+a_1^2+a_2^2}}$, where $F:\r^3\to(0,+\infty)$ and $a_i$ are constants for $i=1,2,3$.\\

{\bf Proof of Claim}.
We can easily check that the surface  $\varphi : (\r^2,\varphi^*h^0) \to (\r^{3}, h^0 = 
 dx^2+dy^2+dz^2)$ with $\varphi(x, y) = (x,y,a_1x+a_2y+a_3)$ is  totally geodesic with the unit normal vector field $\xi^0=\frac{-a_1\frac{\partial}{\partial x}-a_2\frac{\partial}{\partial y}+\frac{\partial }{\partial z}}{\sqrt{1+a_1^2+a_2^2}}$ and the mean curvature $H^0=0$.   It follows from the well-known result (see e.g., \cite{CL,LiO,Ou2}) that $\varphi: (\r^{2},g_0=\varphi^*h_0)\to(\r^{3}, h_0=F^{-2}(dx^2+dy^2+dz^2))$, $\varphi(x, y) = (x,y,a_1x+a_2y+a_3)$    is totally umbilical with the unit normal  vector field  $\xi_0=F\xi^0$ and the  mean curvature $H_0=F H^0+\xi^0 F=\xi^0 F$. From which we get the claim.\\
 
 On the other hand, it follows from   Claim that the surface $\varphi: (\r^{2},g=\varphi^*h)\to(\r^{3}, h=\beta^{-2}h_0)$, $\varphi(x, y) = (x,y,a_1x+a_2y+a_3)$   is  totally umbilical with the unit normal  vector field  $\xi=\beta\xi_0$ and the  mean curvature
$H=\frac{-a_1(F\beta)_ x-a_2(F\beta)_y+(F\beta)_ z}{\sqrt{1+a_1^2+a_2^2}}$.  A straightforward computation gives (see e.g., \cite{LiO})
\begin{equation}\label{c1}\notag
\begin{array}{lll}
{\rm Ric}(\xi,\xi)=\Delta_{h}\ln (F\beta)+[{\rm Hess}_{h}(\ln (F\beta)(\xi,\xi)-
(\xi\ln (F\beta))^2+|{\rm grad}_{h}\ln (F\beta)|_{h}^2]\\

=(F\beta)\Delta_{h^0}(F\beta)-2|{\rm grad}_{h^0}(F\beta)|^2_{h^0}+F\beta{\rm Hess}_{h^0}(F\beta)(\xi^0,\xi^0)\\

=\frac{1+2a_1^2+a_2^2}{1+a_1^2+a_2^2}F\beta(F\beta)_{xx}-2((F\beta)_x)^2+\frac{1+a_1^2+2a_2^2}{1+a_1^2+a_2^2}F\beta(F\beta)_{yy}-2((F\beta)_y)^2\\
+\frac{2+a_1^2+a_2^2}{1+a_1^2+a_2^2}F\beta(F\beta)_{zz}-2((F\beta)_z)^2+\frac{2a_1a_2F\beta(F\beta)_{xy}}{1+a_1^2+a_2^2}-\frac{2a_1F\beta(F\beta)_{xz}}{1+a_1^2+a_2^2}
-\frac{2a_2F\beta(F\beta)_{yz}}{1+a_1^2+a_2^2}.
\end{array}
\end{equation}

 Then, it follows from Theorem \ref{FS1} that
 the conformal immersion $\varphi: (\r^{2}, f^{-1}\varphi^*h)\to(\r^{3}, h=\beta^{-2}h_0)$   is proper biharmonic if and only if 
$f=c|H|^{-1}=\frac{c\sqrt{1+a_1^2+a_2^2}}{|-a_1(F\beta)_ x-a_2(F\beta)_y+(F\beta)_ z|}$, where constant $c>0$, and $ {\rm Ric}(\xi,\xi)=2H^2$
 holds on  the surface  implying that (\ref{SS}) holds on the surface.

From which the proposition follows.
\end{proof}

Applying Proposition \ref{PSSL0}, we immediately have the following corollaries which can be applied to construct  infinitely many examples of proper biharmonic conformal immersions. 
\begin{corollary}\label{SSL0}
 Consider  an isometric immersion  $(\r^2,g_0=\varphi^*h_0) \to (\r^3, h_0 =F^{-2}(dx^2+dy^2+dz^2))$ with $\varphi(x, y) = (x,y,a_3)$, where $a_3$ is a constant. Then, the conformal immersion $\varphi: (\r^{2}, f^{-1}\varphi^*h)\to(\r^{3}, h=\beta^{-2}h_0)$   is proper biharmonic if and only if 
 $F\beta $ satisfies the following 
\begin{equation}\label{SS}
F\beta(F\beta)_{xx}-2((F\beta)_x)^2+F\beta(F\beta)_{yy}-2((F\beta)_y)^2+2F\beta(F\beta)_{zz}-4((F\beta)_z)^2=0
\end{equation}
when restricting to the surface $z=a_3$, and 
\begin{equation}\label{SS0}
f=\frac{c}{|(F\beta )_z|}\circ\varphi,
\end{equation}
 where constant  $c>0$. Moreover, the conformal factor $\lambda=f^{\frac{1}{2}}$ and  the surface  $(\r^2,\varphi^*h)$ has the mean curvature $H=(F\beta )_z$.
\end{corollary}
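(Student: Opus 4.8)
The plan is to obtain Corollary \ref{SSL0} as the special case $a_1=a_2=0$ of Proposition \ref{PSSL0}. The immersion $\varphi(x,y)=(x,y,a_3)$ is exactly the immersion $\varphi(x,y)=(x,y,a_1x+a_2y+a_3)$ of the proposition with the two slope parameters set to zero, so no new geometric input is needed: every assertion should follow by substituting $a_1=a_2=0$ into the already-proved formulas.

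First I would substitute $a_1=a_2=0$ into the mean-curvature expression $H=\frac{-a_1(F\beta)_x-a_2(F\beta)_y+(F\beta)_z}{\sqrt{1+a_1^2+a_2^2}}$, which collapses to $H=(F\beta)_z$, matching the claimed mean curvature. The same substitution in the conformal-factor formula (\ref{PSS0}) turns $\sqrt{1+a_1^2+a_2^2}$ into $1$ and removes the first two terms in the denominator, yielding $f=\frac{c}{|(F\beta)_z|}\circ\varphi$, which is (\ref{SS0}).

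Then I would reduce the biharmonic condition (\ref{PSS}) itself. Setting $a_1=a_2=0$, the rational coefficients simplify: $\frac{1+2a_1^2+a_2^2}{1+a_1^2+a_2^2}$ and $\frac{1+a_1^2+2a_2^2}{1+a_1^2+a_2^2}$ both become $1$, the coefficient $\frac{2+a_1^2+a_2^2}{1+a_1^2+a_2^2}$ becomes $2$, and each of the three mixed-derivative coefficients $\frac{2a_1a_2}{1+a_1^2+a_2^2}$, $\frac{2a_1}{1+a_1^2+a_2^2}$, $\frac{2a_2}{1+a_1^2+a_2^2}$ vanishes. What survives is precisely $F\beta(F\beta)_{xx}-2((F\beta)_x)^2+F\beta(F\beta)_{yy}-2((F\beta)_y)^2+2F\beta(F\beta)_{zz}-4((F\beta)_z)^2=0$, which is (\ref{SS}). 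Since $a_1=a_2=0$ is an admissible choice of constants in the proposition, the equivalence ``proper biharmonic if and only if (\ref{SS}) and (\ref{SS0})'' transports verbatim to this case.

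There is essentially no obstacle here beyond bookkeeping; the only points requiring a moment of care are confirming that all three mixed partial-derivative coefficients drop out simultaneously (they do, since each is proportional to $a_1$ or $a_2$) and that the normalizing factor $\sqrt{1+a_1^2+a_2^2}$ reduces to $1$ in both the mean curvature and the conformal factor, so that the two surviving formulas match the stated ones without any stray constants.
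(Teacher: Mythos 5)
Your proposal is correct and is exactly the paper's own proof: the paper likewise obtains Corollary \ref{SSL0} by setting $a_1=a_2=0$ in Proposition \ref{PSSL0} (stated there in one line), and your explicit verification that the coefficients in (\ref{PSS}) collapse to those of (\ref{SS}) and that the formulas for $f$ and $H$ specialize correctly is just the bookkeeping the paper leaves implicit.
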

\begin{proof}
Taking  $a_1=a_2=0$ in Proposition \ref{PSSL0}, the corollary follows.
\end{proof}

\begin{corollary}\label{PQ1}
For constants $a_1, a_2$ and $a_3$,
then the conformal immersion  $\varphi : (\r^2,f^{-1}\varphi^*h)  \to (\r^3, h = \beta^{-2}(z)(
 dx^2+dy^2+dz^2))$ with $\varphi(x, y) = (x,y,a_1x+a_2y+a_3)$ is proper biharmonic if and only if 
  $\beta(z)\neq{\rm constant}$  satisfies the following 
\begin{equation}\label{pq0}
\beta\beta''-2\beta'^2=0
\end{equation}
when restricting to the surface $z=a_1x+a_2y+a_3$, and 
\begin{equation}\label{pq1}\notag
f=\frac{c \;\sqrt{1+a_1^2+a_2^2}}{|\beta'(z)|}\circ\varphi,
\end{equation}
 where constant  $c>0$. Moreover, the conformal factor $\lambda=f^{\frac{1}{2}}$ and the surface  $(\r^2,\varphi^*h)$ has the mean curvature $H=\frac{\beta'(z)}{\sqrt{1+a_1^2+a_2^2}}$. 
 
\end{corollary}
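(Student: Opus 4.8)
The plan is to obtain Corollary~\ref{PQ1} as the special case of Proposition~\ref{PSSL0} in which the auxiliary function $F$ is trivial and the ambient conformal factor depends only on the last coordinate. First I would set $F\equiv 1$, so that $h_0=dx^2+dy^2+dz^2$ is the flat Euclidean metric and the target metric $h=\beta^{-2}h_0=\beta^{-2}(z)(dx^2+dy^2+dz^2)$ coincides with the one in the statement, and I would take $\beta=\beta(z)$ to depend on $z$ alone. Under this choice $F\beta=\beta(z)$, so every partial derivative of $F\beta$ involving $x$ or $y$ vanishes identically, and only the derivatives $(F\beta)_z=\beta'$ and $(F\beta)_{zz}=\beta''$ survive.

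Next I would substitute these vanishings into the biharmonicity equation~(\ref{PSS}) of Proposition~\ref{PSSL0}. Every bracketed expression except the one carrying the coefficient $\frac{2+a_1^2+a_2^2}{1+a_1^2+a_2^2}$ drops out, leaving
\[
\frac{2+a_1^2+a_2^2}{1+a_1^2+a_2^2}\,\bigl(\beta\beta''-2\beta'^2\bigr)=0 .
\]
Since this prefactor is strictly positive for all real values of $a_1,a_2$, it may be cancelled, and the condition reduces precisely to the equation $\beta\beta''-2\beta'^2=0$ of~(\ref{pq0}). The same substitution collapses the quantity $-a_1(F\beta)_x-a_2(F\beta)_y+(F\beta)_z$ occurring in~(\ref{PSS0}) and in the mean-curvature formula of Proposition~\ref{PSSL0} down to $\beta'(z)$, which immediately yields both $f=c\sqrt{1+a_1^2+a_2^2}/|\beta'(z)|\circ\varphi$ and $H=\beta'(z)/\sqrt{1+a_1^2+a_2^2}$.

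Finally I would account for the hypothesis $\beta(z)\neq{\rm constant}$, which governs properness. From the mean-curvature formula, $H\equiv 0$ holds if and only if $\beta'\equiv 0$; hence requiring $\beta$ nonconstant is exactly what excludes the harmonic, totally geodesic case~$(i)$ of Theorem~\ref{FS1} and at the same time guarantees that the denominator $|\beta'|$ defining $f$ is not identically zero, so that $f$ is a bona fide nonconstant conformal factor. I anticipate no genuine obstacle: the argument is a direct specialization of the already-established Proposition~\ref{PSSL0}, and the only points meriting care are the cancellation of the strictly positive prefactor and the observation that the nonconstancy of $\beta$ forces $H\neq 0$, placing the immersion in case~$(ii)$ rather than case~$(i)$.
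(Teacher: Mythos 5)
Your proposal is correct and matches the paper's own proof, which likewise obtains the corollary by specializing Proposition~\ref{PSSL0} to $F\equiv 1$ and $\beta=\beta(z)$; the vanishing of all $x$- and $y$-derivatives of $F\beta$, the cancellation of the positive factor $\frac{2+a_1^2+a_2^2}{1+a_1^2+a_2^2}$, and the resulting formulas for $f$ and $H$ are exactly the intended reduction. Your closing remarks on how nonconstancy of $\beta$ rules out the harmonic case are a harmless elaboration beyond the paper's one-line argument.
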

\begin{proof}
Applying Proposition \ref{PSSL0} with $F=1$ and $\beta=\beta(z)$ (depending only on $z$),   we immediately get the corollary.
\end{proof}

Taking $a_1=a_2=1$, $a_3=0$, $c=\frac{\sqrt{3}}{3}$ and $\beta(z)=\frac{1}{z}$ in Corollary \ref{PQ1}, we immediately have the example as follows.
\begin{example}\label{pq1}
The conformal immersion  $\varphi : (\r^2_{+},2(dx^2+dxdy+dy^2))  \to (\r^3_{+},  z^{2}(
 dx^2+dy^2+dz^2))$,  $\varphi(x, y) = (x,y,x+y)$ is proper biharmonic with conformal factor $f^{\frac{1}{2}}=x+y$, where $\r^2_{+}=\{(x,y)\in\r^2, x,y>0\}$ and  $\r^3_{+}=\{(x,y,z)\in\r^3, x,y,z>0\}$. Note that the surface $(\r^2_{+},\varphi^*h)$  has nonconstant mean curvature $H=-\frac{1}{\sqrt{3}(x+y)^2}$ and  the domain surface $(\r^2_{+},2(dx^2+dxdy+dy^2))$  is flat. 
\end{example}

\subsection{Biharmonic conformal immersions of a 2-sphere into a conformal 3-sphere.\\}

Let us  now consider a totally geodesic immersion defined by
\begin{equation}\label{TG}
\varphi: (S^2, g_0)\to (S^3,h_0),\;\varphi (u)=(u, 0), \;\forall u\in S^2\subset \r^3
\end{equation}
from the standard 2-sphere into the standard 3-sphere.

It would like to be interesting to know if there is  a nonconstant positive function $\beta $ on the target sphere $ S^3$ so that the conformal immersion  $\varphi: (S^2, g_0)\to (S^3, \beta^{-2} h_0)$ from the standard 2-sphere into the conformal 3-sphere is a proper biharmonic map. Note that the conformal factor $\lambda=f^{\frac{1}{2}}=\beta^{-1}\circ\varphi$.

For the calculations, it is convenient to use local coordinates on  the domain and target spheres, respectively. 
With respect to local coordinates,  the map (\ref{TG}) can be described as 
\begin{equation}\label{TG1}\notag
\begin{array}{lll}
\varphi :(S^2\backslash\{N\},g_0=\frac{4(dx^2+dy^2)}{(1+x^2+y^2)^2}) \to(S^3\backslash\{N'\}, h_0 =\frac{4(dx^2+dy^2+dz^2)}{(1+x^2+y^2+z^2)^2}),\\
\varphi(x, y) = (x,y,0),
\end{array}
\end{equation}
where $\{N\}$ and $\{N'\}$ denote the noth poles on the domain and target spheres, respectively.\\
We shall seek   a positive function $\beta $ on the target sphere $ S^3$ so that the conformal immersion given by

\begin{equation}\label{TG2}\notag
\begin{array}{lll}
\varphi :(S^2\backslash\{N\},g_0) \to(S^3\backslash\{N'\}, h=\beta^{-2}h_0),\;
\varphi(x, y) = (x,y,0)
\end{array}
\end{equation}
is proper biharmonic.

In what follows,  we defines the function  $\Phi_k(x,y,z)$  by 
\begin{equation}\label{GG3}
\begin{array}{lll}
\Phi_k(x,y,z)=\frac{2r^3}{-2rz-2(r^2+z^2)\arctan\frac{z}{r}+k\;r^3(r^2+z^2)}
\end{array}
\end{equation}
where $k$ is a constant and  $r=\sqrt{1+x^2+y^2}$.

Note that for $k\geq6$,  the function $-2rz-2(r^2+z^2)\arctan\frac{z}{r}+k\;r^3(r^2+z^2)\geq  -2rz+(r^2+z^2)(6r^3-\pi)> (r-z)^2+(r^2+z^2)\geq1$ and hence $\Phi_k(x,y,z)>0$ when  $k\geq6$.\\

Our next theorem gives a classification of biharmonic maps in a family of conformal immersions of the standard
2-sphere into the conformal 3-sphere and  a family of proper biharmonic conformal immersions   defined on $S^2\backslash\{N\}$.

\begin{theorem}\label{SSL}
(i)  Any proper biharmonic conformal immersion $\varphi: (S^2, g_0)\to (S^3,h=\beta^{-2}h_0)$ from the standard 2-sphere into the  conformal  3-sphere with \;$\varphi (u)=(u, 0), \;\forall u\in S^2\subset \r^3$, is actually defined on the 2-sphere with at least one point  deleted.\\

(ii) Let $\psi_w(x,y,z)$ be defined by (\ref{gg1}) and $w=w(x,y)$ be a positive harmonic function so that $\psi_w(x,y,z)>0$ on $\r^3$. For  $\beta=\frac{2\psi_w(x,y,z)}{1+x^2+y^2+z^2}$,  
then the conformal immersion $\varphi : (S^2\backslash\{N\},g_0=\frac{4(dx^2+dy^2)}{(1+x^2+y^2)^2}) \to(S^3\backslash\{N'\}, h=\beta^{-2}h_0=\beta^{-2}\;\frac{4(dx^2+dy^2+dz^2)}{(1+x^2+y^2+z^2)^2})$  with $\varphi(x, y) = (x,y,0)$ into the conformal 3-sphere  is proper biharmonic  for the conformal factor $f^{\frac{1}{2}}=\frac{w(x,y)(1+x^2+y^2)}{2}$.\\

(iii) For a family of   positive functions $\beta=\Phi_k(x,y,z)$ defined by (\ref{GG3}) with $k\geq6$, then the conformal immersion $\varphi : (S^2\backslash\{N\},g_0=\frac{4(dx^2+dy^2)}{(1+x^2+y^2)^2}) \to(S^3\backslash\{N'\}, h=\beta^{-2}h_0=\beta^{-2}\;\frac{4(dx^2+dy^2+dz^2)}{(1+x^2+y^2+z^2)^2})$  with $\varphi(x, y) = (x,y,0)$ into the conformal 3-sphere  is proper biharmonic for the conformal factor $f^{\frac{1}{2}}=\frac{k(1+x^2+y^2)}{2}$. 
\end{theorem}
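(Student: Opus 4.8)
The plan is to prove Theorem~\ref{SSL} by reducing everything to the setup of Corollary~\ref{SSL0}, the special case of Proposition~\ref{PSSL0} with $a_1=a_2=0$, since the immersion $\varphi(x,y)=(x,y,0)$ places the surface on the slice $z=0$ in stereographic coordinates. First I would recognize that the standard stereographic coordinates turn $(S^3,h_0)$ into the conformally flat model $h_0=F^{-2}(dx^2+dy^2+dz^2)$ with $F=\tfrac{1+x^2+y^2+z^2}{2}$, and that $\varphi(x,y)=(x,y,0)$ is exactly the coordinate plane $\{z=0\}$, totally geodesic in $(S^3,h_0)$. Writing the target conformal metric as $h=\beta^{-2}h_0$ and setting $G:=F\beta$, Corollary~\ref{SSL0} tells me directly that the conformal immersion is proper biharmonic if and only if, restricted to $z=0$,
\begin{equation}\label{masterG}\notag
G\,G_{xx}-2(G_x)^2+G\,G_{yy}-2(G_y)^2+2\,G\,G_{zz}-4(G_z)^2=0,
\end{equation}
with conformal factor determined by $f=c/|G_z|\circ\varphi$ and mean curvature $H=(G_z)|_{z=0}$. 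So the whole theorem becomes the problem of exhibiting solutions $G$ of this one scalar PDE on the slice and reading off $\beta=G/F$ and $f$.

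For part~(i), the idea is to argue by contradiction: if the map were proper biharmonic and globally defined on all of $S^2$ (including the north pole $N$), then $H$ and the conformal factor $f$ would have to extend continuously across $N$. But Theorem~\ref{FS1}(ii) forces $f|H|=c>0$ to be a \emph{positive constant} wherever the map is proper biharmonic, together with $\mathrm{Ric}^N(\xi,\xi)=2H^2>0$; combined with the requirement that $f=f^{1/2}\cdot f^{1/2}$ be a smooth positive conformal factor on a compact $S^2$, I expect $H$ cannot vanish anywhere (else $f\to\infty$) yet also cannot be a nonzero constant (else, as the theorem notes, $f$ is constant and the map is merely the trivial rescaling of a biharmonic surface, contradicting properness with nonconstant factor). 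Pushing this tension to the pole — where the stereographic chart degenerates — should show a genuine singularity must appear, so the domain is at most $S^2\setminus\{N\}$. The cleanest route is to show that a nonconstant $G_z|_{z=0}$ satisfying the PDE cannot extend to give a smooth nonvanishing $H$ over the whole sphere.

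For parts~(ii) and~(iii) the strategy is constructive: substitute the proposed $\beta$'s, compute $G=F\beta$, and verify \eqref{masterG} on $z=0$. In~(ii), with $\beta=\tfrac{2\psi_w}{1+x^2+y^2+z^2}$ we get $G=F\beta=\psi_w$, so the PDE must hold for $\psi_w$ on the slice; I anticipate $\psi_w$ (defined by the referenced equation~\eqref{gg1}) is engineered so that the harmonicity of $w(x,y)$ — i.e. $w_{xx}+w_{yy}=0$ — is exactly what makes the combination vanish, and the conformal factor computation yields $f^{1/2}=\tfrac{w(1+x^2+y^2)}{2}$ from $f=c/|G_z|$. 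In~(iii), with $\beta=\Phi_k$ one similarly finds $G=F\Phi_k$ collapses to a manageable expression, the positivity for $k\ge 6$ being already established in the displayed estimate above \eqref{GG3}, and one checks $G_z|_{z=0}$ reproduces the stated $f^{1/2}=\tfrac{k(1+x^2+y^2)}{2}$. The main obstacle I foresee is \textbf{part~(i)}: parts~(ii)--(iii) are direct (if lengthy) verifications once the PDE is in hand, but~(i) requires a genuine extension/singularity argument at $N$, where I must rule out that \emph{any} admissible solution of the biharmonic system extends smoothly across the pole. The honest difficulty is controlling the behavior of $H$ and $f$ in the limit as the stereographic coordinate $r\to\infty$ and translating that into a statement about the compactified sphere; I would handle it by showing the constraint $f|H|\equiv c$ together with $H\not\equiv\text{const}$ is incompatible with $f,H$ both being smooth and positive on compact $S^2$.
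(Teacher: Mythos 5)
Your reduction to Corollary \ref{SSL0} and your plan for parts (ii) and (iii) do follow the paper's route: one sets $G=F\beta$, verifies the PDE (\ref{SS}) on $z=0$, and reads off $f$ from $f=c/|G_z|$. Indeed the paper's computation (\ref{gg2}) confirms your anticipation that the left-hand side of (\ref{SS}) collapses to $-(w_{xx}+w_{yy})/w^3$, so harmonicity of $w$ is exactly the biharmonicity condition. One step you gloss over even here: to obtain the \emph{stated} conformal factor you must also check that the domain metric is genuinely $g_0$, i.e. $f^{-1}\varphi^*h=g_0$, equivalently $f=\beta^{-2}|_{z=0}$. This holds precisely because $\psi_w$ solves the auxiliary equation (\ref{gg0}) $\frac{(F\beta)_z}{(F\beta)^2}=F^{-2}$, which is how (\ref{gg1}) arises in the first place; this equation never appears in your plan, and without it $f=c/|G_z|$ and $f=\beta^{-2}|_{z=0}$ are two unrelated requirements.

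The genuine gap is part (i), and it is fatal as sketched. Your proposed contradiction --- that $f|H|\equiv c$ with $H$ nonconstant is incompatible with $f,H$ smooth and positive on a compact $S^2$ --- simply does not exist: for \emph{any} smooth positive nonconstant $H$ the function $f=c/H$ is smooth and positive, so these conditions are mutually consistent and no contradiction can be extracted from them alone. Your elimination of constant $H$ is also invalid: ``proper biharmonic'' means non-harmonic, not ``nonconstant conformal factor,'' so constant-factor (isometric-type) globally defined immersions must be ruled out too, and your argument does not do so. What actually powers the paper's proof of (i) is the ingredient you never use: conformality with respect to the \emph{round} metric $g_0$ forces $f=\beta^{-2}|_{z=0}$, which combined with the biharmonicity constraint $f=c/|(F\beta)_z|$ from Theorem \ref{FS1} yields the equation (\ref{gg0}); its general solution is the explicit family $\psi_w$ of (\ref{gg1}); the PDE (\ref{SS}) then forces $w$ to be a positive harmonic function on $\r^2$, hence $f^{\frac12}=\frac{w(x,y)(1+x^2+y^2)}{2}$. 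Only with this explicit form can one argue at the north pole: if $f$ extended across $N$, the factor $(1+x^2+y^2)$ blowing up forces $w$ to be bounded, hence constant by Liouville's theorem, but then $f^{\frac12}=\frac{k(1+x^2+y^2)}{2}$ is unbounded --- contradiction. This ODE--harmonic-function--Liouville mechanism is the heart of part (i) and is entirely absent from your proposal; the ``tension at the pole'' you hope to exploit cannot be realized without it.
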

\begin{proof}
First of all,  one can easily check that  $\varphi : (\r^2,g_0=\frac{4(dx^2+dy^2)}{(1+x^2+y^2)^2}) \to (\r^3, h_0 =F^{-2}(dx^2+dy^2+dz^2)=\frac{4(dx^2+dy^2+dz^2)}{(1+x^2+y^2+z^2)^2})$, $\varphi(x, y) = (x,y,0)$  is a totally umbilical surface  with the unit normal vector field\; $\xi_0=F\frac{\partial}{\partial z}$  and the mean curvature $H_0|_{z=0}=F_z=z|_{z=0}=0$.  
Then, by Corollary \ref{SSL0}, we can conclude that the totally umbilical surface $\varphi: (\r^{2},g=\varphi^*h)\to(\r^{3}, h=\beta^{-2}h_0)$, $\varphi(x, y) = (x,y,0)$  has the unit normal  vector field  $\xi=F\beta\frac{\partial }{\partial z}$ and the  mean curvature $H=(F\beta)_z$.

Secondly, we will seek the functions $f$ and $\beta$ such that $f^{-1}\varphi^*h=g_0$, i.e., $\frac{c(F\beta )_z}{(F\beta)^2}=\pm F^{-2}$.
We now consider the following $PDE$ 
\begin{equation}\label{gg0}
\begin{array}{lll}
\frac{(F\beta )_z}{(F\beta)^2}= F^{-2},
\end{array}
\end{equation}
where  $F=\frac{1+x^2+y^2+z^2}{2}$. 

The equation (\ref{gg0}) has the general solutions given by
\begin{equation}\label{gg1}
\begin{array}{lll}
F\beta=\psi_{w}(x,y,z)=\frac{r^3(r^2+z^2)}{-2rz-2(r^2+z^2)\arctan\frac{z}{r}+w(x,y)\;r^3(r^2+z^2)},
\end{array}
\end{equation}
where $r=\sqrt{1+x^2+y^2}$ and $w=w(x,y)$ is  a nonzero differentiable function.

We shall show that $w(x,y)$ is  nonzero harmonic function on $\r^2$, i.e., $w_{xx}+w_{yy}=0$.
In fact, substituting $F\beta=\psi_w(x,y,z)$ into the left-hand side of (\ref{SS}) and restricting  to the surface $z=0$, one can conclude that
\begin{equation}\label{gg2}
\begin{array}{lll}
\{F\beta(F\beta)_{xx}-2((F\beta)_x)^2+F\beta(F\beta)_{yy}-2((F\beta)_y)^2+2F\beta(F\beta)_{zz}-4((F\beta)_z)^2\}|_{z=0}\\
=\{\psi_w(\psi_w)_{xx}-2(\psi_w)_x^2+\psi_w(\psi_w)_{yy}-2(\psi_w)_y^2+2\psi_w(\psi_w)_{zz}-4(\psi_w)_z^2\}|_{z=0}
=-\frac{w_{xx}+w_{yy}}{w^3}.
\end{array}
\end{equation}
Using this and (\ref{SS}), it follows  that $w_{xx}+w_{yy}=0$ and hence $w(x,y)$ is  nonzero harmonic function on $\r^2$.
This implies that in this case $\beta=\frac{2\psi_w(x,y,z)}{1+x^2+y^2+z^2}$ and $f^{\frac{1}{2}}=((F\beta)_z|_{z=0})^{-\frac{1}{2}}=((F\beta)^2F^{-2}|_{z=0})^{-\frac{1}{2}}=\beta^{-1} \circ\varphi=\frac{w(x,y)(1+x^2+y^2)}{2}$ and hence $w(x,y)>0$.\\

 Thirdly, we show that   $f^{\frac{1}{2}}=\frac{w(x,y)(1+x^2+y^2)}{2}$ is locally  defined on $S^2$. If otherwise, i.e., $f^{\frac{1}{2}}=\frac{w(x,y)(1+x^2+y^2)}{2}$
 is globally defined on $S^2$  $( S^2\equiv \r^2\cup\{\infty\})$, 
  this implies that the positive function $w(x,y)$ is a nonconstant, bounded  function on $S^2$ (i.e., $\r^2\cup\{\infty\})$. It follows from  Liouville' theorem that a bounded harmonic function 
  on Euclidean space is a constant, then the $w(x,y)$ is a constant, a contradiction. Therefore,  it follows that the conformal factor $\lambda=f^{\frac{1}{2}}$ has at least one singular point at the north pole.\\

Finally, we show that there  must have a positive harmonic function $w(x,y)$ on $\r^2$ so that either of  the two functions $ f^{\frac{1}{2}}$ and  $\beta$  has the only singular point at the north  pole. Note that  we already mentioneded that for constant $k\geq6$, any function $\Phi_k(x,y,z)$ defined by (\ref{GG3}) is  a positive function. Then,  for constant harmonic function  $w=w(x,y)=k\geq6$, we immediately obtain the two positive functions $\beta=\frac{\psi_{k}(x,y,z)}{F}=\Phi_k(x,y,z)$ on $\r^3$ and $f^{\frac{1}{2}}=\frac{k(1+x^2+y^2)}{2}$ on $\r^2$, each of which has the only singular point at the north  pole. 

 Summarizing all results above,  we get the theorem.
\end{proof}

\begin{remark}
 (1) We would like to point out that  none of the locally defined proper biharmonic conformal immersions from $S^2$ given in Theorem \ref{SSL} can be extended to a global
map  $\varphi : (S^2,g_0) \to(S^3, h=\beta^{-2}h_0)$ since each of them  as conformal immersion has at least one  singular point at the north pole. In particular,  we see that each of  biharmonic conformal immersions in Statement (iii) of Theorem \ref{SSL} has the only  one  singular point at the north pole. \\
It is interesting to compare the examples of proper biharmonic conformal immersions provided by Statement (iii) of Theorem \ref{SSL}  with the  
well-known theorem  by Sacks-Uhlenbeck in \cite{SU}  stating that any harmonic map
$\phi:S^2\longrightarrow (N^n,h)$ with $n\ge 3$ has to be a conformal minimal immersion  away from points where the differential of the map vanishes.\\

(2) By Corollary \ref{TGC},  the conformal 3-sphere $(S^3, h=\beta^{-2}h_0)$ in Theorem \ref{SSL}  cannot be  a nonpositively curved manifold.

\end{remark}

We now give a description of the  biharmonic isometric  immersion $S^2(\frac{1}{\sqrt{2}})\to S^3$  by using
the local coordinates.

\begin{proposition}\label{ISS}
A (part of) sphere $S^2(\frac{1}{\sqrt{2}})$ in $S^3$ is proper biharmonic. Furthermore,  the isometric immersion can be locally expressed as
\begin{equation}\label{ss}
\begin{array}{lll}
\varphi : \left(S^2(\frac{1}{\sqrt{2}}),\frac{4(dx^2+dy^2)}{(2+x^2+y^2)^2}\right) \to\left(S^3, \frac{4(dx^2+dy^2+dz^2)}{(1+x^2+y^2+z^2)^2}\right),\;
 \varphi(x, y) = (x,y,1).
\end{array}
\end{equation}
\end{proposition}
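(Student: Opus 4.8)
The plan is to realize the round $S^3$ as a conformally flat space of exactly the type handled in Corollary \ref{SSL0}, and then to read off biharmonicity directly from the reduced equation (\ref{SS}). Concretely, I would write the standard metric of $S^3$ in stereographic coordinates as $h_0 = F^{-2}(dx^2+dy^2+dz^2)$ with $F = \frac{1+x^2+y^2+z^2}{2}$, and take the trivial target conformal factor $\beta \equiv 1$, so that $h = \beta^{-2}h_0 = h_0$ is precisely the metric displayed in (\ref{ss}). The surfaces to be tested are the horizontal slices $\varphi(x,y)=(x,y,a_3)$, and the whole point of the argument is to detect which slice is biharmonic.

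First I would substitute $F\beta = F$ into the left-hand side of (\ref{SS}). Since $F_x=x,\,F_y=y,\,F_z=z$ and $F_{xx}=F_{yy}=F_{zz}=1$, the expression collapses to $4F - 2x^2 - 2y^2 - 4z^2 = 2(1-z^2)$. Hence (\ref{SS}) holds on the slice $z=a_3$ exactly when $a_3^2=1$; taking $a_3=1$ singles out $\varphi(x,y)=(x,y,1)$ as a biharmonic conformal immersion. This is the conceptual heart of the proof: the biharmonic condition for this family reduces to $1-z^2=0$, so the relevant surface is not the totally geodesic slice $z=0$ but the slice $z=1$.

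Next I would record the mean curvature and conformal factor supplied by Corollary \ref{SSL0}. The slice has $H=(F\beta)_z = z$, so on $z=1$ we get $H=1\neq 0$ and the map is non-minimal, hence non-harmonic. The corollary also gives $f = c/|(F\beta)_z| = c/|z|$, which restricted to $z=1$ is the constant $c$; choosing $c=1$ forces $f\equiv 1$, so the conformal factor $\lambda=f^{1/2}\equiv 1$ and $\varphi$ is a genuine isometric immersion. Being non-harmonic, it is therefore proper biharmonic, which proves the first assertion.

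Finally I would identify the surface. The induced metric is $\varphi^*h|_{z=1} = \frac{4(dx^2+dy^2)}{(2+x^2+y^2)^2}$, matching the domain metric in (\ref{ss}). Writing this as $e^{2\mu}(dx^2+dy^2)$ with $e^{\mu}=\frac{2}{2+x^2+y^2}$ and using $K=-e^{-2\mu}(\mu_{xx}+\mu_{yy})$, a short computation gives $\mu_{xx}+\mu_{yy} = -\frac{8}{(2+x^2+y^2)^2}$ and hence $K\equiv 2$; thus the slice is (part of) the round sphere of radius $1/\sqrt2$, as claimed. I expect no real obstacle here: the only step requiring care is the bookkeeping choice $F=\frac{1+x^2+y^2+z^2}{2}$, $\beta\equiv 1$, and the placement at $z=1$, after which the reduction of (\ref{SS}) to $2(1-z^2)=0$ and the curvature computation are both routine.
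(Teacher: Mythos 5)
Your proposal is correct and takes essentially the same route as the paper, which likewise applies Corollary \ref{SSL0} with $F=\frac{1+x^2+y^2+z^2}{2}$ and $\beta\equiv 1$ to the slice $z=1$; the paper merely cites ``an argument similar to Theorem \ref{SSL}'' where you explicitly carry out the reduction of (\ref{SS}) to $2(1-z^2)=0$, the check $H=z=1\neq 0$ with $f\equiv c$, and the Gaussian curvature computation identifying $S^2(\frac{1}{\sqrt{2}})$. Your write-up is in fact more complete than the paper's, the only small omission being the paper's closing remark that the isometric immersion extends smoothly across the deleted point to all of $S^2(\frac{1}{\sqrt{2}})$.
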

\begin{proof}
Applying Corollary \ref{SSL0} with $F=\frac{1+x^2+y^2+z^2}{2}$ and $\beta=1$, and  using an argument similar to Theorem \ref{SSL}, we can conclude that 
the surface  defined by the isometric immersion 
\begin{equation}\label{ss1}
\begin{array}{lll}
\varphi : \left(S^2(\frac{1}{\sqrt{2}})\backslash\{N\},\frac{4(dx^2+dy^2)}{(2+x^2+y^2)^2}\right) \to\left(S^3\backslash\{N'\}, \frac{4(dx^2+dy^2+dz^2)}{(1+x^2+y^2+z^2)^2}\right),\;
 \varphi(x, y) = (x,y,1)
\end{array}
\end{equation}
is proper biharmonic. Note that  the map $\varphi$ as the isometric immersion  can be extended smoothly to  $S^2(\frac{1}{\sqrt{2}})$.
 From which , we obtain the proposition.

\end{proof}

\begin{remark}
We remark that  the authors in \cite{CMO} showed that a (part of) sphere $S^2(\frac{1}{\sqrt{2}})$ in $S^3$ is proper biharmonic with a diﬀerent method.  Interestingly,  our Proposition \ref{ISS} give a description of the  above proper biharmonic map in the local coordinates.
\end{remark}

At the end of this section, by using an argument similar to Theorem \ref{SSL},  we can construct a family of  biharmonic conformal immersions from $H^2(-1)$ as follows.
\begin{proposition}\label{HSSL}
 For $\rho=\sqrt{1-x^2-y^2}$ and $\beta=\frac{2\rho^3(\rho^2-z^2)}{-2\rho z-2(\rho^2-z^2)\arctan\frac{z}{\rho}+w(x,y)\;\rho^3(\rho^2-z^2)}$, where  $w=w(x,y)$ is a positive harmonic function on $\r^2$ so that $\beta>0$, locally, then the conformal immersion $\varphi : (H^2(-1),g_0=\frac{4(dx^2+dy^2)}{(1-x^2-y^2)^2}) \to(H^3(-1), h=\beta^{-2}\;\frac{4(dx^2+dy^2+dz^2)}{(1-x^2-y^2-z^2)^2})$  with $\varphi(x, y) = (x,y,0)$ from a hyperbolic 2-space into the conformal hyperbolic 3-space is proper biharmonic  for the conformal factor $f^{\frac{1}{2}}=\frac{w(x,y)(1-x^2-y^2)}{2}$.\\

\end{proposition}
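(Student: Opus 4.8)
The plan is to run the argument of Theorem~\ref{SSL} almost verbatim, replacing the round conformal sphere by the Poincar\'e ball model of $H^3(-1)$. First I would write the target as $h_0=F^{-2}(dx^2+dy^2+dz^2)$ with $F=\frac{1-x^2-y^2-z^2}{2}$ on the unit ball, and observe that the slice $z=0$, parametrised by $\varphi(x,y)=(x,y,0)$, is a totally umbilical surface whose induced metric is exactly $g_0=\frac{4(dx^2+dy^2)}{(1-x^2-y^2)^2}$, with unit normal $\xi_0=F\frac{\partial}{\partial z}$ and mean curvature $H_0=F_z=-z$, so that $H_0|_{z=0}=0$. This is precisely the setting of Corollary~\ref{SSL0} with $a_1=a_2=a_3=0$: the conformal immersion $\varphi:(H^2(-1),f^{-1}\varphi^*h)\to(H^3(-1),h=\beta^{-2}h_0)$ is proper biharmonic if and only if $F\beta$ solves (\ref{SS}) along $z=0$ and $f=c\,|(F\beta)_z|^{-1}\circ\varphi$.

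Next I would impose that the conformal immersion recover the standard hyperbolic metric, $f^{-1}\varphi^*h=g_0$, which as in (\ref{gg0}) is the first-order equation $\frac{(F\beta)_z}{(F\beta)^2}=F^{-2}$. Setting $G=F\beta$ and integrating $-\partial_z(1/G)=4(\rho^2-z^2)^{-2}$ in $z$, with $\rho=\sqrt{1-x^2-y^2}$, introduces one free function of integration $w=w(x,y)$. The elementary integral $\int 4(\rho^2-z^2)^{-2}\,dz$ yields a rational term together with an inverse-hyperbolic-tangent term, the hyperbolic counterpart of the $\arctan\frac{z}{r}$ occurring in (\ref{gg1}); collecting everything over the common denominator $\rho^3(\rho^2-z^2)$ produces the closed form for $F\beta$, and hence for $\beta$ and $f^{\frac{1}{2}}$, recorded in the statement.

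The decisive step is to pin down $w$. Substituting this explicit $F\beta=\psi_w$ into the left-hand side of (\ref{SS}) and restricting to $z=0$, I expect, just as in the identity (\ref{gg2}) for Theorem~\ref{SSL}, all rational and inverse-hyperbolic terms to cancel, leaving a nonzero constant multiple of $-(w_{xx}+w_{yy})/w^3$. Then (\ref{SS}) forces $w_{xx}+w_{yy}=0$, so $w$ is harmonic on the disk; conversely any positive harmonic $w$ keeping the denominator positive gives a genuine solution. A direct evaluation of $f^{\frac{1}{2}}=\beta^{-1}\circ\varphi$ at $z=0$ (where the denominator of $F\beta$ collapses to $w\rho^5$) returns $f^{\frac{1}{2}}=\frac{w(x,y)(1-x^2-y^2)}{2}$, and the immersion is proper because $H=(F\beta)_z\neq0$ on the surface and $f^{\frac{1}{2}}$ is nonconstant.

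I expect the one real obstacle to be this cancellation, i.e.\ verifying that (\ref{SS}) collapses to Laplace's equation for $w$: it amounts to differentiating the rational/inverse-hyperbolic expression $\psi_w$ twice in each variable and checking that everything but the Laplacian of $w$ vanishes at $z=0$. The sign changes relative to the sphere case (the $-z^2$ in $F$, and arctanh in place of arctan) must be carried carefully throughout, and the positivity constraints ($\rho^2>0$ on the disk, $w>0$, denominator positive) must be kept in force so that $\beta$ and the conformal factor stay well defined.
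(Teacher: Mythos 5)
Your proposal is correct and takes essentially the same route as the paper: the paper gives no separate proof of Proposition \ref{HSSL}, saying only that it follows ``by an argument similar to Theorem \ref{SSL}'', which is precisely what you execute (Corollary \ref{SSL0} with $F=\frac{1-x^2-y^2-z^2}{2}$, integration of $(F\beta)_z/(F\beta)^2=F^{-2}$ to get $F\beta$ up to a free function $w(x,y)$, and reduction of (\ref{SS}) at $z=0$ to $w_{xx}+w_{yy}=0$, the last cancellation indeed working because $F\beta=1/u$ with $u=w-v$ and $v$ odd in $z$). One remark in your favor: your integration honestly produces ${\rm arctanh}\frac{z}{\rho}=\frac{1}{2}\ln\frac{\rho+z}{\rho-z}$ and $F\beta=\rho^3(\rho^2-z^2)/D$, hence $\beta=2\rho^3/D$, so the ``$\arctan$'' and the extra factor $(\rho^2-z^2)$ in the paper's stated $\beta$ are evidently typos; your version is the one consistent with the stated conformal factor $f^{\frac{1}{2}}=\frac{w(x,y)(1-x^2-y^2)}{2}$, since at $z=0$ the denominator collapses to $w\rho^5$.
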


\section{Biharmonic  conformal immersions of  Hopf cylinders of a Riemannian submersion}

Let $\pi:(N^3,h)\to (B^2,h_1)$ be a Riemannian submersion with totally geodesic fibres.
Let $\gamma:I\to(B^2, h_1)$  be
 an immersed regular curve parametrized by arclength  and  $ \tilde{\gamma}: I \to (N^3,h)$ be a
horizontal lift of $\gamma$ with the geodesic curvatrue $\kappa$ and the torsion $\tau$. 
 Then $\Sigma=\cup_{s\in I}\pi^{-1}(\gamma(s))$ is a surface  called a {\bf Hopf cylinder} in $N^3$, which
can be also viewed as a disjoint union of all horizontal lifts of the curve $\gamma$.
 
Let $\{X,V,\xi\}$ be an orthonormal frame adapted  to the cylinder with $\xi$ being unit normal vector filed.  We denote by $A$   the shape operator of the surface with
respect to $\xi$.  With respect to the  frame $\{X,V,\xi\}$,  we can compute the mean curvature,  the second fundamental form  and other terms that appear in the equation (\ref{ff1})  given by (see e.g., \cite{Ou4})
 \begin{eqnarray}\label{hf1}
&& A(X)=-\langle\nabla_{X}\xi,X\rangle X-\langle\nabla_{X}\xi,V\rangle V=\kappa X-\tau V,\\\notag
&& A(V)=-\langle\nabla_{V}\xi,X\rangle X-\langle\nabla_{V}\xi,V\rangle V=-\tau X;\\\notag
&& B(X,X)=\langle A(X),X\rangle=\kappa,\; B(X,V)=\langle A(X),V\rangle=-\tau,\\\notag
&& B(V,X)=\langle A(V),X\rangle=-\tau,\; B(V,V)=\langle A(V),V\rangle=0,\\\notag
\end{eqnarray}
and
\begin{eqnarray}\label{hf2}
&& H=\frac{1}{2}(B(X,X)+ B(V,V))=\frac{\kappa}{2},\\\notag
&&A({\rm grad}H)=A(X(\frac{\kappa}{2})X+V(\frac{\kappa}{2})V)=X(\frac{\kappa}{2})A(X)=\frac{\kappa'}{2}(\kappa X-\tau V);\\\notag
&&\Delta H=XX(H)+VV(H)-\nabla_{X}X(H)-\nabla_{V}V(H)=\frac{\kappa''}{2};\\\notag
&&|A|^2=(B(X,X))^2+(B(X,V))^2+(B(V,X))^2+(B(V,V))^2=\kappa^2+2\tau^2.\\\notag
\end{eqnarray}

It is  well know that the Hopf cylinder can be viewed as an isometric immersion $\varphi: \Sigma \to(N^3,h)$ with the induced metric $g=\varphi^*h$. Furthermore, the cylinder can be parametrized by $\varphi(s,z)=(\gamma(s),z)\subset(N^3,h)$.\\

Summarizing the above argument, we  give the biharmonic equation for  conformal immersions of the Hopf cylinders of a Riemannian submersion from   $(N^3,h)$.
 \begin{proposition}\label{Hopf0}
A conformal immersion of  the Hopf cylinder  $\varphi:(\Sigma,\;\bar{ g}=f^{-1}\varphi^*h)\to (N^3,h)$ is biharmonic if and only if:

\begin{equation}\label{Hopf}
\begin{cases}
\kappa''-\kappa^3-2\kappa \tau^2+\kappa{\rm Ric}^N(\xi,\xi)+2\kappa' X(\ln f)\\+\kappa\{XX(\ln f)+VV(\ln f)+(X\ln f)^2+(V\ln f)^2\}=0,\\
3\kappa\kappa'-2\kappa{\rm Ric}^N(\xi,X)+2\kappa^2X(\ln f)-2\kappa \tau V(\ln f)=0,\\
\kappa'\tau+\kappa{\rm Ric}^N(\xi,V)+\kappa\tau X(\ln f)=0.
\end{cases}
\end{equation}
\end{proposition}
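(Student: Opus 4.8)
The plan is to view the Hopf cylinder as the isometric immersion $\varphi:(\Sigma,g=\varphi^*h)\to(N^3,h)$ and to substitute the geometric data already recorded in (\ref{hf1}) and (\ref{hf2}) directly into the conformal biharmonic system (\ref{ff1}) of Theorem \ref{ff0}, with $fH=\tfrac12 f\kappa$. Concretely I feed in $H=\tfrac{\kappa}{2}$, $|A|^2=\kappa^2+2\tau^2$, $\mathrm{grad}_gH=\tfrac12\kappa'X$ and $A(\mathrm{grad}_gH)=\tfrac{\kappa'}{2}(\kappa X-\tau V)$, together with $A(X)=\kappa X-\tau V$ and $A(V)=-\tau X$. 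Since $\kappa$ and $\tau$ are functions of the arclength $s$ of $\gamma$ alone, one has $X(\kappa)=\kappa'$, $V(\kappa)=0$ and likewise for $\tau$, so every derivative that survives acts only on $f$ (equivalently on $\ln f$) or on $\kappa,\tau$. I will use $\Delta H=\tfrac{\kappa''}{2}$ from (\ref{hf2}) and the Leibniz rule $\Delta(fH)=H\Delta f+f\Delta H+2\langle\mathrm{grad}_gf,\mathrm{grad}_gH\rangle$, which with $\mathrm{grad}_gH=\tfrac12\kappa'X$ gives $\Delta(fH)=\tfrac{\kappa}{2}\Delta f+\tfrac{f\kappa''}{2}+\kappa'X(f)$.

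The step I regard as the crux is to put the Laplacian and gradient of an \emph{arbitrary} function into connection-free form in the adapted frame, namely $\Delta u=XX(u)+VV(u)$ and $\mathrm{grad}_gu=X(u)X+V(u)V$ for all $u\in C^\infty(\Sigma)$; this is exactly what converts $\Delta f$ into $\ln f$ correctly, because $\tfrac{\Delta f}{f}=\Delta(\ln f)+|\mathrm{grad}_g\ln f|^2$ and one then needs $\Delta(\ln f)=XX(\ln f)+VV(\ln f)$ and $|\mathrm{grad}_g\ln f|^2=(X\ln f)^2+(V\ln f)^2$. The content here is the identity $\nabla^{\Sigma}_XX=\nabla^{\Sigma}_VV=0$. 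The second equality holds because the fibres are totally geodesic one-dimensional submanifolds, so each $V$-curve is a geodesic of $N$ lying in $\Sigma$ with $B(V,V)=0$, forcing $\nabla^{\Sigma}_VV=0$. For the first, the horizontal distribution of $\pi$ is spanned by $X$ and $\xi$ (since $\xi\perp V$ is horizontal), and the O'Neill integrability tensor is skew, so $A_XX=0$; hence $\nabla^N_XX=\kappa\xi$ is normal to $\Sigma$ and $\nabla^{\Sigma}_XX=0$. Granting this, $\tfrac{\Delta f}{f}=XX(\ln f)+VV(\ln f)+(X\ln f)^2+(V\ln f)^2$ and $\tfrac{X(f)}{f}=X(\ln f)$.

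With these in hand the three equations fall out by substitution and bookkeeping. Inserting $\Delta(fH)$ and $|A|^2-\mathrm{Ric}^N(\xi,\xi)$ into the first (scalar) line of (\ref{ff1}) and normalizing by the positive factor $\tfrac12 f$ yields the first line of (\ref{Hopf}), with the terms $\kappa''-\kappa^3-2\kappa\tau^2+\kappa\,\mathrm{Ric}^N(\xi,\xi)$ and the linear term $2\kappa'X(\ln f)$ coming from the cross term in $\Delta(fH)$. For the vector line I expand $\mathrm{grad}_g(fH)=\tfrac12(\kappa X(f)+f\kappa')X+\tfrac12\kappa V(f)V$, apply $A$ using (\ref{hf1}), and add $(fH)[\mathrm{grad}_gH-(\mathrm{Ric}^N(\xi))^{\top}]$ with $(\mathrm{Ric}^N(\xi))^{\top}=\mathrm{Ric}^N(\xi,X)X+\mathrm{Ric}^N(\xi,V)V$; reading off the $X$-component (the coefficient $\tfrac32\kappa\kappa'$ arising as $\kappa\kappa'$ from $A(\mathrm{grad}_g(fH))$ plus $\tfrac12\kappa\kappa'$ from $(fH)\mathrm{grad}_gH$) gives the second line of (\ref{Hopf}) up to a positive multiple, and the $V$-component gives the third. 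The main obstacle is precisely the frame identity $\nabla^{\Sigma}_XX=\nabla^{\Sigma}_VV=0$ of the middle paragraph: once it is justified from the submersion geometry the rest is routine, and the only care needed is to track the quadratic gradient terms $(X\ln f)^2+(V\ln f)^2$ and the cross term $2\kappa'X(\ln f)$ through the computation.
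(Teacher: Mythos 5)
Your proposal is correct and follows essentially the same route as the paper, whose proof is exactly this one-line substitution of the data (\ref{hf1})--(\ref{hf2}) into the conformal biharmonic system (\ref{ff1}) followed by simplification; your expanded bookkeeping reproduces all three lines of (\ref{Hopf}) with the right coefficients. The only addition is that you explicitly justify the frame identities $\nabla^{\Sigma}_XX=\nabla^{\Sigma}_VV=0$ (via the skewness of the O'Neill tensor and the totally geodesic fibres), which the paper uses implicitly in its formula $\Delta H=\frac{\kappa''}{2}$ in (\ref{hf2}) -- a worthwhile clarification, not a different method.
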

\begin{proof}
We substitute (\ref{hf1}) and (\ref{hf2}) into biharmonic  equation (\ref{ff1})  and simplify the resulting equation to obtain (\ref{Hopf}). 
\end{proof}

Recall that we call the 3-dimensional Riemannian manifold
$$M^3{(m,\;l)}=(\r^{3},h=\frac{dx^2+dy^2}{[1+m(x^2+y^2)]^2}+[dz+\frac{l}{2}\frac{y
dx-x dy}{1+m(x^2+y^2)}]^2)$$
 3-diemnsional  Bianchi-Cartan-Vranceeanu  spaces (3-dimensional BCV spaces for
short), which include   conformally flat spaces $\r^3$, $S^3$, $S^2\times\r$ and $H^2\times\r$ besides the  model spaces $Nil$, $\widetilde{SL}(2,\r)$ and $SU(2)$.
A natual orthonormal frame on $M^3{(m,\;l)}$ is given by 
\begin{equation}\label{X190}\notag 
E_{1}=F\frac{\partial}{\partial
x}-\frac{ly}{2}\frac{\partial}{\partial z},\;E_{2}=F
\frac{\partial}{\partial y}+\frac{lx}{2}\frac{\partial}{\partial
z},\;E_{3}=\frac{\partial}{\partial z},
\end{equation}
where $F=1+m(x^2+y^2)$.\\
Note that the map $\pi : M^3{(m,\;l)}\to (\r^2, h_1=\frac{dx^2+dy^2}{[1+m(x^2+y^2)]^2})$, $\pi(x,y,z)=(x, y)$ is a Riemannian submersion with totally geodesic fibres and $E_3=\frac{\partial}{\partial z}$ being tangent to the fibres.
 We assume that $\gamma:I\to(\r^2, h_1=\frac{dx^2+dy^2}{[1+m(x^2+y^2)]^2})$  with $\gamma(s)=(x(s),y(s))$ is
 an immersed regular curve parametrized by arclength and  $ \tilde{\gamma}: I \to (N^3,h)$ is a
horizontal lift of $\gamma$ with the geodesic curvatrue $\kappa$ and the torsion $\tau$. Therefore,  the surface $\Sigma=\bigcup_{s\in I}\pi^{-1}(\gamma(s))$ is  a Hopf cylinder in a BCV space $M^3{(m,\;l)}$. As  mentioned earlier,  the Hopf cylinder can be viewed as an isometric immersion $\varphi: \Sigma \to M^3{(m,\;l)}$ with the induced metric $g=\varphi^*h$, which can be  parametrized by $\varphi(s,z)=(\gamma(s),z)=(x(s),y(s),z)\subset M^3{(m,\;l)}$.\\

We would like to point out that if the curvatrue $\kappa$ is a nonzero constant,  then $4m-\kappa^2<0$. In fact,  clearly,  if $m\leq0$ then $4m-\kappa^2<0$. On the other hand, we see that if $m>0$, then $\gamma$ is a curve  on  a 2-sphere $S^2$  of Gauss curvature $4m$ . It follows from a
well-known fact in the differential geometry of curvers and surfaces that  radius of the curve $\frac{1}{|\kappa|}<\frac{1}{\sqrt{4m}}$, i.e., $4m-\kappa^2<0$.\\

We now give the  classification result of biharmonic conformal immersions of the Hopf cylinders $\Sigma$ into $M^3{(m,\;l)}$.
\begin{theorem}\label{hopf}
If a conformal immersion of a Hopf cylinder  into a BCV space \;$\varphi:(\Sigma,\bar{ g}=f^{-1}\varphi^*h)\to M^3{(m,\;l)}$  is proper biharmonic  with nonconstant conformal factor $f^{\frac{1}{2}}$ , then   $l=0$, and either (i): the cylinder has  nonzero constant mean curvature   $H=\frac{\kappa}{2}$ satisfying  $\kappa^2-4m>0$, and  $
f=d_1e^{z\sqrt{\kappa^2-4m} }+d_2e^{-z\sqrt{\kappa^2-4m} }$,
 where $d_1$, $d_2$ are constants with $d_1^2+d_2^2\neq0$ so that $f>0$, or (ii):  the cylinder has  nonconstant mean curvature   $H=\frac{\kappa}{2}$, and $f=\psi(z)\kappa^{-3/2}(s)=\psi(z,K)\kappa^{-3/2}(s,m,K)$, where $\kappa(s)=\kappa(s,m,K)$ and $\psi(z)=\psi(z,K)$ are given by (\ref{KK1}) and (\ref{KK2}), respectively, and $K$ is a constant.
\end{theorem}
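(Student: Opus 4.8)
The plan is to substitute the geometry of the BCV space $M^3(m,l)$ into the system of Proposition \ref{Hopf0} and analyze the resulting equations. First I would record the curvature data in the adapted frame $\{X,V,\xi\}$, where $V=E_3$ is vertical and $X,\xi$ span the horizontal distribution. Since the natural frame diagonalizes the Ricci tensor with ${\rm Ric}^N(E_1,E_1)={\rm Ric}^N(E_2,E_2)=4m-\tfrac{l^2}{2}$ and ${\rm Ric}^N(E_3,E_3)=\tfrac{l^2}{2}$, I get ${\rm Ric}^N(\xi,\xi)=4m-\tfrac{l^2}{2}$ and ${\rm Ric}^N(\xi,X)={\rm Ric}^N(\xi,V)=0$. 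The torsion $\tau$ of the horizontal lift equals half the bundle curvature, hence $\tau=\tfrac{l}{2}$ is a constant (in particular $\tau=0\iff l=0$). Because $\gamma$ is arclength and the fibres are unit-speed geodesics orthogonal to the horizontal lift, the induced metric is $g=ds^2+dz^2$; thus $\Sigma$ is flat, $X=\partial_s$, $V=\partial_z$, and the covariant-derivative corrections drop out, so $XX(\ln f)=(\ln f)_{ss}$ and $VV(\ln f)=(\ln f)_{zz}$. Feeding all of this into (\ref{Hopf}) turns the three equations into an explicit system in $\kappa(s)$ and $u:=\ln f(s,z)$. Throughout I may divide by $\kappa=2H$, which is nonzero since the immersion is proper biharmonic and hence non-minimal.

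Second I would prove $l=0$. With the data above the third equation of (\ref{Hopf}) reads $\tau(\kappa'+\kappa u_s)=0$. Assume, for contradiction, that $\tau\neq 0$. Then $u_s=-\kappa'/\kappa$, and inserting this into the (divided-by-$\kappa$) second equation $3\kappa'+2\kappa u_s-2\tau u_z=0$ gives $u_z=\kappa'/(2\tau)$. Both $u_s$ and $u_z$ are then functions of $s$ alone, and comparing the mixed partials $u_{sz}=\partial_s(\kappa'/(2\tau))=\kappa''/(2\tau)$ with $u_{zs}=\partial_z(-\kappa'/\kappa)=0$ forces $\kappa''=0$. If $\kappa'=0$ then $u$, and hence $f$, is constant, contradicting the hypothesis, so $\kappa(s)=c_0s+b$ with $c_0\neq 0$. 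Substituting these expressions into the first equation of (\ref{Hopf}) makes the $c_0^2/\kappa$ terms cancel and yields, after dividing by $\kappa$, an identity whose right-hand side is a constant while its left-hand side $\kappa^2$ is nonconstant in $s$, a contradiction. Therefore $\tau=0$ and $l=0$.

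Third, with $l=0$ (so $\tau=0$ and ${\rm Ric}^N(\xi,\xi)=4m$) the third equation of (\ref{Hopf}) is vacuous and the second becomes $3\kappa'+2\kappa u_s=0$, i.e. $u_s=-\tfrac32(\ln\kappa)'$. Integrating in $s$ gives $\ln f=-\tfrac32\ln\kappa(s)+\ln\psi(z)$, that is $f=\psi(z)\kappa^{-3/2}(s)$, exactly the claimed form. I would then substitute $u_s=-\tfrac32\kappa'/\kappa$ and $u_z=\psi'/\psi$ into the first equation of (\ref{Hopf}); the terms $\pm\kappa(\psi'/\psi)^2$ cancel, and the equation separates into a pure $z$-part $\psi''/\psi=K$ and a pure $s$-part $\tfrac12\kappa''/\kappa-\tfrac34(\kappa'/\kappa)^2+\kappa^2-4m=K$ for a separation constant $K$; these are (\ref{KK1}) and (\ref{KK2}). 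When $\kappa$ is nonconstant this is case (ii). When $\kappa$ is a nonzero constant the $s$-equation collapses to $K=\kappa^2-4m$, which is positive by the curvature bound $4m-\kappa^2<0$ recorded before the theorem, and $\psi''=K\psi$ with $K>0$ integrates to $f=\psi=d_1e^{z\sqrt{\kappa^2-4m}}+d_2e^{-z\sqrt{\kappa^2-4m}}$; positivity and nonconstancy of $f$ force $d_1^2+d_2^2\neq 0$, giving case (i).

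The hard part is the second step: one must certify that $\tau$ is genuinely a constant multiple of $l$, so that $\tau\neq0$ is equivalent to $l\neq0$, and then propagate the assumption $l\neq0$ through the first equation of (\ref{Hopf}) down to the contradiction $\kappa^2=\text{const}$. The delicate bookkeeping is the cancellation of the $c_0^2/\kappa$ terms there, and the analogous cancellation of the $(\psi'/\psi)^2$ terms in the third step that makes the variables separate; both must be verified by direct computation.
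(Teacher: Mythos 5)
Your proposal is correct and follows essentially the same route as the paper: you substitute the BCV curvature data and $\tau=\pm\frac{l}{2}$ into Proposition \ref{Hopf0}, rule out $l\neq 0$ using the third and second equations together with equality of mixed partials of $\ln f$ (this is exactly the paper's Case II contradiction $\kappa^2=\mathrm{const}$), and for $l=0$ integrate the second equation to $f=\psi(z)\kappa^{-3/2}(s)$ and separate variables in the first, arriving at the same pair of ODEs as the paper's (\ref{cy6}), whose solutions are (\ref{KK1}) and (\ref{KK2}). The only deviations are cosmetic: the order of the two cases, your sign convention for $\tau$ (immaterial, since only $\tau^2$, the vanishing of $\tau$, and $(V\ln f)^2$ enter the computation), and your spelling out the mixed-partials step that the paper leaves implicit.
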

\begin{proof}
Adopting the same notations as the above. It is clear to see that if $\kappa=0$, then the surface $\Sigma$ is minimal ( harmonic) and hence any conformal immersion of the surface is also harmonic.   From now on, we assume that $\kappa\neq0$. \\
 We know from  \cite{OW} that the  orthonormal frame $\{X=\frac{x'}{F}E_1+\frac{y'}{F}E_2,\;V=E_3,\;\xi=\frac{y'}{F}E_1-\frac{x'}{F}E_2\}$ is adapted to the Hopf cylinder $\Sigma=\cup_{s\in I}\pi^{-1}(\gamma(s))$, and also
${\rm Ric}^N(\xi,\xi)=4m-\frac{l^2}{2},\;({\rm Ric}(\xi))^{\top}=0$\;and $\tau=-\langle\nabla_XV,\xi\rangle=-\frac{l}{2}$. Combining these,  (\ref{Hopf}) turns into
\begin{equation}\label{HP}
\begin{cases}
\kappa''-\kappa^3-\frac{\kappa l^2}{2}+\kappa(4m-\frac{l^2}{2})+2\kappa' X(\ln f)\\+\kappa\{XX(\ln f)+VV(\ln f)+(X\ln f)^2+(V\ln f)^2\}=0,\\
3\kappa\kappa'+2\kappa^2X(\ln f)+\kappa l V(\ln f)=0,\\
l(\kappa'+\kappa X(\ln f))=0.
\end{cases}
\end{equation}

We  solve (\ref{HP}) by  considering the following two cases:\\

Case I: $l=0$.  Eqs. (\ref{HP}) becomes
\begin{equation}\label{hp2}
\begin{cases}
\kappa^2\frac{\partial \ln f}{\partial s}=-\frac{3}{2}\kappa'(s)\kappa,\\
\kappa''(s)-\kappa^3+4m\kappa+\kappa[\frac{\partial^2\ln f}{\partial s^2}+\frac{\partial^2\ln f}{\partial z^2}+(\frac{\partial\ln f}{\partial s})^2+(\frac{\partial\ln f}{\partial z})^2]+2\frac{\partial\ln f}{\partial s}\kappa'(s)=0.
\end{cases}
\end{equation}
We earlier remarked that that if  $\kappa$ is a nonzero constant,  then $4m-\kappa^2<0$. Then, 
a direct computation using (\ref{hp2}) gives $f=d_1e^{z\sqrt{\kappa^2-4m} }+d_2e^{-z\sqrt{\kappa^2-4m} }$,
 where $d_1$, $d_2$ are constants with $d_1^2+d_2^2\neq0$ so that $f>0$.

If  $\kappa$ is nonconstant,  then a straightforward computation using (\ref{hp2}) and simplifying the resulting equation yields
\begin{equation}\label{cy05}
\begin{array}{lll}
\ln f=-\frac{3}{2}\ln |\kappa(s)| +\ln \psi(z),\\
-2\kappa\kappa''(s)+3\kappa'^2(s)-4\kappa^4+16m\kappa^2+4\kappa^2(s)\frac{\psi''(z)}{\psi(z)}=0,
\end{array}
\end{equation}
where $\psi(z)$ is a positive function.
Since the second equation of (\ref{cy05}) implies that for any $s,z$, we can conclude 
\begin{equation}\label{cy6}
\begin{array}{lll}
3\kappa'^2(s)-2\kappa\kappa''(s)=4\kappa^2(\kappa^2-(4m+K))\;\;{\rm and}\;\;
\frac{\psi''(z)}{\psi(z)}=K,
\end{array}
\end{equation}
 where $K$ is a constant.
 
Solving  (\ref{cy6}), we obtain (see \cite{WQ})
\begin{equation}\label{KK1}
\kappa(s)=\kappa(s,m,K)=\begin{cases}
\frac{1}{C_1e^{2\sqrt{-(4m+K)}\;s}+D_1e^{-2\sqrt{-(4m+K)}\;s}+\sqrt{4C_1D_1+\frac{1}{4m+K}}},\;\;\;\;\;\;\;\;{\rm for}\; 4m+K<0,\\
\frac{4C_2}{16+C_2^2(s+D_2)^2},\;\;\;\;\;\;\;\;\;\;\;\;\;\;\;\;\;\;\;\;\;\;\;\;\;\;\;\;\;\;{\rm for}\; 4m+K=0,\\
\frac{1}{C_3\cos(2\sqrt{4m+K}\;s)+D_3\sin(2\sqrt{4m+K}\;s)+ \sqrt{\frac{1}{4m+K}+(C_3^2+D_3^2)}},\;{\rm for}\; 4m+K>0,
\end{cases}
\end{equation}
and

 \begin{equation}\label{KK2}
\psi(z)= \psi(z,K)=\begin{cases}
a_3\sin(\sqrt{-K}\;z)+b_3\cos(\sqrt{-K}\;z),\;\;\; {\rm for}\; K<0,\\
a_2z+b_2,\;\;\;\;\;\;\;\;\;\;\;\;\;\;\;\;\; {\rm for}\; K=0,\\
a_1e^{\sqrt{K}\;z}+b_1e^{-\sqrt{K}\;z},\;\;\;\;\;\;\;\; {\rm for}\; K>0,
\end{cases}
\end{equation}
 where $a_i$, $b_i$, $C_i$ and $D_i$ are constants, $i=1,2,3$,  so that $\kappa$ is a nonconstant positive function and $\psi>0$.
Clearly, $f=\psi(z)\kappa^{-3/2}(s)=\psi(z,K)\kappa^{-3/2}(s,m,K)$.\\

Case II: $l\neq0$.  We use the second  and the third equations of (\ref{HP}) to have $X(\ln f)=\frac{\partial\ln f}{\partial s}=-\frac{\kappa'}{\kappa}$ and $V(\ln f)=\frac{\partial\ln f}{\partial z}=-\frac{\kappa'}{l}$, which, together with that $f$ is nonconstant,   implies $\kappa=as+b$, where $a\neq0$ and $b$ are constants. Substituting these into the 1st equation of (\ref{HP}) and simplifying the resulting equation we have $-(as+b)^3+(4m-l^2+\frac{a^2}{l^2})(as+b)=0$. This implies $a=b=0$  contradicting $a\neq0$.\\
Summarizing both Case I and Case II, we complete the proof of the theorem.
\end{proof}

Note that when $l=0$,  the potential BCV space has to be $S^2\times\r$, $H^2\times\r$ or $\r^3$. As a consequence of Theorem \ref{hopf}, we have
\begin{corollary}\label{chopf}
(i) A proper biharmonic conformal immersion of a Hopf cylinder  into a BCV space \;$\varphi:(\Sigma,\bar{ g}=f^{-1}\varphi^*h)\to M^3{(m,\;l)}$  with nonconstant conformal factor $f^{\frac{1}{2}}$ 
exists only in the cases: $(\Sigma,\bar{ g}=f^{-1}\varphi^*h)\to S^2\times\r$, $(\Sigma,\bar{ g}=f^{-1}\varphi^*h)\to H^2\times\r$ or $(\Sigma,\bar{ g}=f^{-1}\varphi^*h)\to \r^3$. \\
(ii) There exist no proper biharmonic conformal immersions of the Hopf cylinders  into $S^3$, $Nil$, $\widetilde{SL}(2,\r)$ or $SU(2)$  with nonconstant conformal factor.
\end{corollary}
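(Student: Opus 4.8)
The plan is to deduce Corollary \ref{chopf} directly from Theorem \ref{hopf}, whose central output is that the existence of a proper biharmonic conformal immersion of a Hopf cylinder with nonconstant conformal factor $f^{\frac{1}{2}}$ forces the structure constant $l=0$. Everything then reduces to reading off which BCV spaces $M^3(m,l)$ occur for $l=0$ versus $l\neq0$, so that no further analytic work is required; the argument is essentially classification bookkeeping on top of the theorem.

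For part (i), I would recall that the Riemannian submersion $\pi:M^3(m,l)\to(\r^2,h_1)$ has totally geodesic fibres generated by $E_3=\frac{\partial}{\partial z}$, and that the twisting term $\frac{l}{2}\frac{y\,dx-x\,dy}{1+m(x^2+y^2)}$ in the defining metric vanishes identically precisely when $l=0$. In that case the metric degenerates to the Riemannian product $h=h_1+dz^2$, whose base $(\r^2,h_1)$ is the simply connected surface of constant Gauss curvature $4m$. Splitting on the sign of $m$ then yields exactly three product geometries: $S^2\times\r$ for $m>0$, the flat $\r^3$ for $m=0$, and $H^2\times\r$ for $m<0$. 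Since Theorem \ref{hopf} has already pinned down $l=0$, these are the only targets that can support the immersions in question, which is statement (i).

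For part (ii), I would observe that each of the remaining model geometries in the BCV family---the round $S^3$, $Nil$, $\widetilde{SL}(2,\r)$ and the Berger spheres $SU(2)$---is realized in these coordinates only with $l\neq0$: the Heisenberg group $Nil$ corresponds to $m=0,\ l\neq0$, the space $\widetilde{SL}(2,\r)$ to $m<0,\ l\neq0$, and the (Berger or round) three-spheres to $m>0,\ l\neq0$, the round metric occurring for the special relation $4m=l^2$. Because Theorem \ref{hopf} excludes $l\neq0$ altogether for proper biharmonic conformal immersions with nonconstant conformal factor, none of these four targets can carry such an immersion, which is statement (ii).

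The one point needing care---and the only place the argument could be misread---is the role of the round $S^3$. Although $S^3$ is conformally flat and is listed alongside the product spaces, in the Hopf-fibred BCV presentation it arises with $l\neq0$ (its fibres being the Hopf great circles), \emph{not} as an $l=0$ product. Thus the apparent tension with part (i), where $S^2\times\r$ rather than $S^3$ appears for $m>0$, is resolved precisely by this twisting: for $m>0$ the untwisted ($l=0$) geometry is $S^2\times\r$, while $S^3$ enters only through $l\neq0$ and is therefore excluded. I expect this distinction to be the main conceptual obstacle; the remainder is a direct appeal to the conclusion $l=0$ of Theorem \ref{hopf}.
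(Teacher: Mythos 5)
Your proposal is correct and follows essentially the same route as the paper: the paper likewise deduces the corollary by combining the conclusion $l=0$ of Theorem \ref{hopf} with the observation that $l=0$ forces the BCV space to be one of the products $S^2\times\r$, $H^2\times\r$ or $\r^3$, while $S^3$, $Nil$, $\widetilde{SL}(2,\r)$ and $SU(2)$ all require $l\neq0$. Your extra care in noting that the round $S^3$ sits in the BCV family only with $l\neq0$ (via $4m=l^2$) is a correct and useful clarification of the same bookkeeping the paper performs implicitly.
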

\begin{corollary}
(i) Any  proper biharmonic conformal immersion of a Hopf cylinder  into a BCV space exists only in the case where the target space can only be $S^2\times\r$, $H^2\times\r$, $\r^3$ or $SU(2)$.\\
(ii) There exists no proper biharmonic conformal immersion of a Hopf cylinder  into $S^3$, $\widetilde{SL}(2,\r)$ or $Nil$ no matter  what conformal factor is.

\end{corollary}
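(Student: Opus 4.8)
The plan is to split the argument according to whether the conformal factor $f^{\frac{1}{2}}$ is constant or nonconstant. The nonconstant regime has already been completely settled by Theorem \ref{hopf} and Corollary \ref{chopf}, so the real new content is the constant regime, which corresponds (up to homothety) to the isometric immersion itself, i.e.\ to a proper biharmonic Hopf cylinder as a submanifold. Treating these two regimes separately and then taking the union of the admissible targets should produce both statements at once.

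First I would dispose of the nonconstant case. If $f^{\frac{1}{2}}$ is nonconstant, Corollary \ref{chopf} immediately forces $l=0$ and restricts the target to $S^2\times\r$, $H^2\times\r$, or $\r^3$. This contributes exactly these three spaces to part (i), and since $S^3$, $\widetilde{SL}(2,\r)$ and $Nil$ all have $l\neq0$, they are already excluded from the nonconstant regime.

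Next I would treat the constant case by specializing the biharmonic system (\ref{Hopf}) of Proposition \ref{Hopf0}. When $f$ is constant, every occurrence of $X(\ln f)$, $V(\ln f)$ and their second derivatives vanishes, so the system collapses to $\kappa''-\kappa^3-2\kappa\tau^2+\kappa\,{\rm Ric}^N(\xi,\xi)=0$, together with $3\kappa\kappa'-2\kappa\,{\rm Ric}^N(\xi,X)=0$ and $\kappa'\tau+\kappa\,{\rm Ric}^N(\xi,V)=0$. Substituting the BCV data ${\rm Ric}^N(\xi,\xi)=4m-\frac{l^2}{2}$, $({\rm Ric}^N(\xi))^{\top}=0$ and $\tau=-\frac{l}{2}$ recorded in the proof of Theorem \ref{hopf}, the second equation reduces to $3\kappa\kappa'=0$. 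Since proper biharmonicity requires $\kappa\neq0$, this yields $\kappa'=0$, so $\kappa$ is a nonzero constant, and the first equation then degenerates to the algebraic relation $\kappa^2=4m-l^2$. Hence a proper biharmonic Hopf cylinder with constant factor exists if and only if $4m-l^2>0$.

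The decisive step, and the one I expect to be the main obstacle, is translating the inequality $4m-l^2>0$ back into the list of model geometries, in particular separating the round sphere $S^3$ from the Berger spheres $SU(2)$. Here I would invoke the standard identification of the BCV family: $l=0$ gives $S^2\times\r$ ($m>0$), $\r^3$ ($m=0$) and $H^2\times\r$ ($m<0$), while $l\neq0$ gives $SU(2)$ ($m>0$, the round $S^3$ occurring precisely when $4m=l^2$), $Nil$ ($m=0$) and $\widetilde{SL}(2,\r)$ ($m<0$). Checking the sign of $4m-l^2$ case by case shows that $4m-l^2>0$ holds only for $S^2\times\r$ and for those Berger spheres $SU(2)$ with $4m>l^2$; the round sphere $S^3$ ($4m=l^2$) forces $\kappa=0$ and is therefore merely minimal, while $H^2\times\r$, $\r^3$, $Nil$ and $\widetilde{SL}(2,\r)$ all give $4m-l^2\le0$ and admit no constant-$\kappa$ solution. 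Taking the union of the constant and nonconstant regimes produces exactly $\{S^2\times\r,\,H^2\times\r,\,\r^3,\,SU(2)\}$ for part (i), and the complementary exclusion of $S^3$, $\widetilde{SL}(2,\r)$ and $Nil$ for part (ii).
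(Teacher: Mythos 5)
Your proof is correct, and it takes a genuinely different route from the paper's at the decisive step. Both arguments use the same decomposition --- nonconstant conformal factor disposed of by Corollary \ref{chopf}, constant conformal factor reduced to the classification of proper biharmonic Hopf cylinders as submanifolds --- but for the constant-factor regime the paper simply cites an external result (Theorem 2.2 of \cite{OW}, which asserts that proper biharmonic Hopf cylinders in BCV spaces occur only in $S^2\times\r$ and $SU(2)$), whereas you rederive that classification from scratch by setting $f=\mathrm{const}$ in system (\ref{Hopf}) of Proposition \ref{Hopf0} and inserting the data ${\rm Ric}^N(\xi,\xi)=4m-\frac{l^2}{2}$, $({\rm Ric}^N(\xi))^{\top}=0$, $\tau=-\frac{l}{2}$ already recorded in the proof of Theorem \ref{hopf}. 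Your computation ($\kappa'=0$ from the tangent equation, then $\kappa^2=4m-l^2$ from the normal equation, then a sign check of $4m-l^2$ against the standard BCV identification, with round $S^3$ corresponding to $4m=l^2$) is sound and reproduces exactly the content of the quoted theorem of \cite{OW}. What your route buys: self-containedness, and a uniform treatment of \emph{all} constant factors $f\equiv c$, whereas the paper's proof literally addresses only factor $1$ and tacitly relies on the (true, and stated in the introduction) equivalence between biharmonicity with a constant factor and biharmonicity of the underlying isometric immersion. What the citation buys: brevity. One caveat worth flagging: your relation $\kappa^2=4m-l^2$ contradicts the paper's earlier remark that a nonzero constant $\kappa$ must satisfy $\kappa^2-4m>0$; since your conclusion agrees with \cite{OW}, which the paper itself invokes, this tension points to an inconsistency in that remark of the paper rather than to a flaw in your argument, but the two statements cannot both hold as written.
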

\begin{proof}
It follows from Theorem 2.2 in \cite{OW} that a proper Hopf cylinder in  3-dimensional BCV spaces exists only in the cases: $\Sigma$ in $S^2\times\r$ or $\Sigma$ in $SU(2)$.
Note that these maps are isometric immersions as a special subclass of conformal immersions for conformal factor $1$. From these and Corollary \ref{chopf},  the corollary follows.
\end{proof}

\begin{example}\label{hopff}
(1) Let  $\gamma:(0,\frac{\pi}{2\sqrt{2}})\to S^2=(\r^2, h_1=\frac{dx^2+dy^2}{[1+\frac{1}{4}(x^2+y^2)]^2})$, $\gamma(s)=(x(s),y(s))$ 
be an immersed regular curve parametrized by arclength with  the curvature $\kappa=\frac{1}{\frac{\sqrt{2}}{2}\sin(2\sqrt{2}\;s)+1}$. Then the Hopf cylinder $\varphi: (\Sigma,\varphi^*h) \to S^2\times\r=(\r^{3},h=\frac{dx^2+dy^2}{[1+\frac{1}{4}(x^2+y^2)]^2}+dz^2)$  with $\varphi(s,z)=(\gamma(s),z)$ has the mean curvatue $H= \frac{\kappa}{2}$. For $\psi=e^z$ and $f=\psi\kappa^{-\frac{3}{2}}=e^z\{\frac{\sqrt{2}}{2}\sin(2\sqrt{2}\;s)+1\}^{\frac{3}{2}}$, then the conformal immersion $\varphi:(\Sigma,\bar{ g}=f^{-1}\varphi^*h)\to S^2\times\r$  with  conformal factor $f^{\frac{1}{2}}$ is proper biharmonic.\\

Indeed, it is not difficult to see that taking $m=\frac{1}{4}$, $K=1$, $C_3=0$ and $D_3=\frac{\sqrt{2}}{2}$ in (\ref{KK1}), we get $\kappa=\frac{1}{\frac{\sqrt{2}}{2}\sin(2\sqrt{2}\;s)+1}<1$ and the  corresponding function $\psi=e^z$ by taking  $K=1$, $a_1=1$ and $b_1=0$ in (\ref{KK2}). 
It follows from a well-known fact in the differential geometry of curves and surfaces that  there exist the curve $\gamma:(0,\frac{\pi}{2\sqrt{2}})\to S^2$ with $\gamma(s)=(x(s),y(s))$ such that its curvature $\kappa=\frac{1}{\frac{\sqrt{2}}{2}\sin(2\sqrt{2}\;s)+1}$. This implies that
the Hopf cylinder $\Sigma=\bigcup_{s\in I}\pi^{-1}(\gamma(s))$  in $S^2\times\r$ has the mean curvatue $H= \frac{\kappa}{2}$,
which can be viewed as an isometric immersion $\varphi: (\Sigma, g=\varphi^*h)\to (\r^2,h=\frac{dx^2+dy^2}{[1+\frac{1}{4}(x^2+y^2)]^2}+dz^2)$  with $\varphi(s,z)=(\gamma(s),z)=(x(s),y(s),z)\subset S^2\times\r$. Thus,  by Theorem \ref{hopf}, we obtain the statement (1).\\

In a similar way, we have the following \\
(2) Let  $\gamma:I\to H^2=(\r^{2},h_1=\frac{dx^2+dy^2}{[1-\frac{1}{4}(x^2+y^2)]^2})$, $\gamma(s)=(x(s),y(s))$ 
be an immersed regular curve parametrized by arclength with  the curvature $\kappa=\frac{1}{1+s^2}$. 
For  $f=(1+s^2)^{\frac{3}{2}}e^z$,
then the conformal immersion of the Hopf cylinder \;$\varphi:(\Sigma,\bar{ g}=f^{-1}\varphi^*h)\to H^2\times\r$ with $\varphi(s,z)=(\gamma(s),z)$   is proper biharmonic. Note that the associate surface  has the mean curvature $H=\frac{1}{2(1+s^2)}$.\\

(3)  Consider the curve  $\gamma:I\to (\r^{2},dx^2+dy^2)$ with $\gamma(s)=(\ln(\sqrt{1+s^2}+s)+\ln 4,
 \sqrt{1+s^2})$. Then the curve $\gamma$ has the curvature $\kappa=\frac{1}{1+s^2}$. For  $f=(1+s^2)^{\frac{3}{2}}z$ with  $z>0$,  then the conformal immersion of the Hopf cylinder into $\r^2\times\r^{+}$, i.e.,  $\varphi:(\Sigma, f^{-1}(ds^2+dz^2))\to (\r^2\times\r^{+},h=dx^2+dy^2+dz^2)$ with $\varphi(s,z)=(\ln(\sqrt{1+s^2}+s)+\ln 4, \sqrt{1+s^2},\; z)$,  is a proper biharmonic map.  Note that the associate surface  has the mean curvature $H=\frac{1}{2(1+s^2)}$.
\end{example}
\begin{remark}
Note that  biharmonic conformal immersions of  the cylinders into $\r^3$ and of constant mean curvature Hopf cylinders into $S^2\times \r$ have been characterized in \cite {WQ} and \cite{Ou4}, respectively.  Our Theorem \ref{hopf} recovers the above results. Interestingly,  we give an  example of proper biharmonic conformal immersion of  a nonconstant mean curvature  Hopf cylinder into $S^2\times \r$. 

\end{remark}

Statements and Declarations:\\
1. Funding: Ze-Ping Wang was supported by the Natural Science Foundation of China (No. 11861022) and by the Scientific and Technological Project in Guizhou Province ( Grant no. Qiankehe LH [2017]7342).\\
2. Conflict of interest: The authors declare that they have no conflict of interest.\\
3. Data Availability Statement: The results/data/figures in this manuscript have not been published elsewhere, nor are they under consideration by another publisher.

\end{document}